\newtheorem{thm}{Theorem}[section]
\newtheorem{cor}[thm]{Corollary}
\newtheorem{lemma}[thm]{Lemma}
\newtheorem{prop}[thm]{Proposition}
\theoremstyle{definition}
\newtheorem{defn}[thm]{Definition}
\newtheorem{remark}[thm]{Remark}
\newcommand{\bb}[1]{\mathbb{#1}}
\newcommand{\cl}[1]{\mathcal{#1}}
\newcommand{\ff}[1]{\mathfrak{#1}}
\newcommand{\cstar}{C^*(\bb F(n,2))}
\newcommand\Gammat{{\widetilde\Gamma}}
\begin{document}

\title{Non-closure of the set of quantum correlations via Graphs}
\author[K.~Dykema]{Ken Dykema$^*$}
\address{Department of Mathematics, Texas A \& M University, College Station, TX 77843-3368, USA}
\email{kdykema@math.tamu.edu}
\thanks{{}$^*$This work was supported by a grant from the Simons Foundation/SFARI (524187, K.D.)}

\author[V.~I.~Paulsen]{Vern I.~Paulsen$^\dag$}
\address{Institute for Quantum Computing and Department of Pure Mathematics, University of Waterloo,
Waterloo, ON, Canada N2L 3G1}
\email{vpaulsen@uwaterloo.ca}
\thanks{${}^\dag$Supported in part by NSERC}

\author[J.~Prakash]{Jitendra Prakash$^\dag$}
\address{Institute for Quantum Computing and Department of Pure Mathematics, University of Waterloo,
Waterloo, ON, Canada N2L 3G1}
\email{jprakash@uwaterloo.ca}

\keywords{Tsirelson's problems, finite input-output games, Connes' embedding conjecture}
\subjclass[2010]{Primary 46L05; Secondary 47L90}

\begin{abstract}
We prove that the set of quantum correlations for a bipartite system of 5 inputs and 2 outputs is not closed.
Our proof relies on computing the correlation functions of a graph, which is a concept that we introduce. 

\end{abstract}

\maketitle

\section{Introduction}
Suppose that two labs, Alice's and Bob's, exist in an entangled state and each lab has a finite set of quantum experiments that they can perform and each experiment has a finite number of outcomes. The conditional probability that Alice gets outcome $a$ and Bob gets outcome $b$ given that they perform experiments $x$ and $y$ respectively, is denoted $p(a,b|x,y)$. Such densities are generally called {\it quantum correlations}. If we assume that each lab has $n$ experiments and each experiment has $k$ outcomes, then the set of all possible quantum correlations is a convex subset of $n^2k^2$-tuples. There are several, possibly different, mathematical models that could describe the elements in these sets.  The sets from the various models are denoted, $C_q(n,k), C_{qs}(n,k), C_{qa}(n,k)$ and $C_{qc}(n,k)$, and satisfy
\[ C_q(n,k) \subseteq C_{qs}(n,k) \subseteq C_{qa}(n,k) \subseteq C_{qc}(n,k).\] The {\it Tsirelson conjectures} \cites{Ts1, Ts2} are concerned with the relationships between the sets obtained by these various models.  Originally, it was not known if these sets were all the same or were possibly different.  A great deal of additional interest developed around these problems when it was shown that equality of two of these models, $C_{qa}(n,k) = C_{qc}(n,k)$ for all $n$ and $k$ was equivalent to the {\it Connes embedding conjecture} \cites{JNPPSW, Oz13, Fritz}, a major open problem in the theory of operator algebras.

Recently, Slofstra \cite{Sl17} has shown that the set of quantum correlations $C_q(n,k)$ is not closed, when the number of experiments and the number of outputs is sufficiently high ($n \sim 100, k =8$). Since $C_{qa}(n,k)$ is always closed, his result shows that $C_{q}(n,k) \ne C_{qa}(n,k),$ for some values of $n$ and $k$.  His proof relies on a number of deep constructions in geometric group theory, and the number $n$ is defined somewhat implicitly.  So it is natural to seek simpler proofs and to wonder about the case of small numbers of inputs and outputs. 

In this paper we will show that $C_q(5,2)$ is not closed and hence not equal to $C_{qa}(5,2)$, by studying the properties of a function that we call the {\it graph correlation function}.

Given a graph, we wish to study several functions that measure the least possible {\it total tracial correlation}, when we assign a projection of fixed trace to each vertex and measure the total correlation between projections that are at adjacent vertices.  The goals of this study are on the one hand to try and shed further light on the conjectures of Connes and Tsirelson and on the other hand to introduce this new parameter of a graph and show some of its connections to other problems. We will see that determining where this correlation function is equal to 0, is equivalent to finding the {\it fractional chromatic number} of the graph, when the algebra is abelian, and Man\v{c}inska-Roberson's {\it projective rank} \cites{Ro, Ro-thesis} of the graph when the algebra is required to be finite dimensional.

We begin with the definitions of the functions that we shall be interested in studying.

Let $G=(V,E)$ be a simple nonempty graph on $n$ vertices with vertex set $V$ and edges $E \subseteq V \times V$. If we let $\bb F(n,2)$ denote the free product of $n$ copies of the group of order 2, then  the full group $C^*$-algebra, $C^*(\bb F(n,2))$, is the universal unital $C^*$-algebra generated by projections, $e_v=e_v^2 = e_v^*$, $v \in V$.  By a {\it tracial state} on $\cstar$ we mean a positive unital linear functional, $\tau: \cstar \to \bb C$, satisfying $\tau(ab)=\tau(ba)$ for all $a,b\in C^*(\bb F(n,2))$. For $0 \le t \le 1$ we set 
\begin{multline}\label{f-qc-def}
f_{qc}(t) = \inf \bigg\{ \sum_{(v,w) \in E} \tau(e_ve_w): \tau \text{ is a tracial state on } C^*(\bb F(n,2)), \\ \tau(e_v)=t, \text{ for all } v\in V  \bigg\}.
\end{multline} Our notation suppresses the dependence of this function on the graph $G$. Notice that each edge $(v,w)$ appears twice in $E$ as $(v,w)$ and $(w,v)$. Thus if $|E|$ denotes the cardinality of the edge set of $G$, then it is twice the number of actual edges. 

Recall that every state $\tau$ on $\cstar$ has a {\it Gelfand-Naimark-Segal (GNS) representation}, that is, there exists a Hilbert space $\cl H$, a unital $*$-homomorphism $\pi: \cstar \to B(\cl H)$, and a unit vector $\psi \in \cl H$ such that  $\tau(a) = \langle \pi(a) \psi, \psi \rangle$ for all $a\in C^*(\bb F(n,2))$.  We shall call a state $\tau$ on $\cstar$ {\it finite dimensional} provided that the Hilbert space in the GNS representation is finite dimensional. We shall call a state {\it abelian} if
the image of $C^*(\bb F(n,2))$ under the GNS representation is commutative. This latter condition is equivalent to the existence of a probability space $(X, \mu)$ and measurable subsets $X_v$, such that $\tau(e_ve_w) = \mu(X_v \cap X_w)$, for all $v,w\in V$.

We set $f_q(t)$ (respectively, $f_{loc}(t)$) equal to the infimum in \eqref{f-qc-def} but taken over the set where $\tau$ is restricted to be a finite-dimensional (respectively, abelian) tracial state.

Here is the first relevance of this function.

\begin{prop}\label{ranks-and-functions}
Let $G$ be a graph on $n$ vertices.  Then,
\begin{enumerate}
\item $\big( \sup \{ t: f_{loc}(t) =0 \} \big)^{-1}$ is equal to the fractional chromatic number of $G$,
\item $\big( \sup \{ t: f_{q}(t)=0 \} \big)^{-1}$ is equal to Man\v{c}inska-Roberson's projective rank \cite{Ro} of $G$,
\item $\big( \sup \{ t: f_{qc}(t) =0 \} \big)^{-1}$ is equal to the tracial rank \cite{PSSTW} of $G$.
\end{enumerate}
\end{prop}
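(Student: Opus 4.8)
The plan is to derive all three identities from one common fact---call it the bridge---and then compare the resulting quantities with the definitions of the three graph parameters.

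\emph{The bridge.} I would first record that for any tracial state $\tau$ on $\cstar$ and any edge $(v,w)$ one has $e_ve_we_v=(e_we_v)^*(e_we_v)\ge 0$, so by cyclicity $\tau(e_ve_w)=\tau(e_ve_we_v)\ge 0$; hence $\sum_{(v,w)\in E}\tau(e_ve_w)$ is a sum of nonnegative terms and vanishes exactly when $\tau(e_ve_w)=0$ on every edge. Next I would pass to the GNS triple $(\pi,\cl H,\psi)$ of $\tau$, set $\cl M=\pi(\cstar)''$, and use that, $\tau$ being a trace, right multiplication gives operators in $\cl M'$ with dense orbit through $\psi$, so $\psi$ is cyclic for $\cl M'$, hence separating for $\cl M$, and the normal extension $\bar\tau=\langle\,\cdot\,\psi,\psi\rangle$ of $\tau$ is faithful on $\cl M$. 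With $p_v:=\pi(e_v)$, faithfulness upgrades $\bar\tau(p_vp_wp_v)=0$ to $p_vp_w=0$. The converse is immediate from the universal property of $\cstar$. This shows: $f_{qc}(t)=0$ iff $G$ admits a ``$t$-system''---projections $p_v$ of common trace $t$ with $p_vp_w=0$ on edges---in some tracial von Neumann algebra; and, restricting to abelian (resp.\ finite-dimensional) $\tau$, the algebra may be taken to be $L^\infty(X,\mu)$ with $p_v=\mathbf 1_{X_v}$ (resp.\ a direct sum $\bigoplus_i M_{d_i}$ with a faithful trace).

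\emph{The abelian case.} Here I would observe that an abelian tracial state on $\cstar$ is a probability measure $\nu$ on $\{0,1\}^V$, that $\nu(\{x_v=x_w=1\})=0$ on edges forces $\nu$ to sit on indicator points of stable sets, and hence that $f_{loc}(t)=0$ is precisely the feasibility of $c_I\ge 0$, $\sum_I c_I=1$, $\sum_{I\ni v}c_I=t$ over stable sets $I$. For $t>0$ the substitution $y_I=c_I/t$ identifies this with a fractional colouring of weight $1/t$; conversely an optimal fractional colouring can be massaged to cover every vertex exactly once (shift weight from a stable set $I$ through an over-covered vertex $v$ to $I\setminus\{v\}$) and its slack dumped on $\emptyset$ to reach any smaller common value. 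Hence $\{t:f_{loc}(t)=0\}=[0,1/\chi_f(G)]$, giving (1).

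\emph{The remaining cases, and the main difficulty.} Since the tracial states form a weak-$*$ compact set and the relevant functionals are continuous, $f_{qc}(t)=0$ is attained, so the bridge reads $\sup\{t:f_{qc}(t)=0\}$ off as the supremum of the common traces of $t$-systems in tracial von Neumann algebras---i.e.\ $1/\xi_{tr}(G)$ \cite{PSSTW}, which is (3). For (2) the bridge gives $\sup\{t:f_q(t)=0\}$ as the supremum of common traces of $t$-systems in finite-dimensional $C^*$-algebras with faithful traces; to match this with the Man\v{c}inska--Roberson projective rank \cite{Ro} I would reduce the ambient algebra to a single $M_D$: amplifying $\bigoplus_i M_{d_i}$ by suitable multiplicities realises the given trace as the restriction of the normalised trace of a large $M_D$ (the admissible weight vectors form the intersection of the open simplex with a rational affine subspace, so it suffices to treat rational weights, for which the amplification is exact), and in $M_D$ a system of projections of common \emph{trace} automatically has common \emph{rank}, so it is a genuine $D/r$-representation. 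The real difficulty, and the reason this last part is not routine, is that---in contrast with $f_{qc}$---the infimum defining $f_q$ need not be attained; indeed this is the non-closure phenomenon the paper is after. Thus $f_q(t)=0$ only yields a \emph{sequence} of finite-dimensional tracial states with $\sum_{(v,w)\in E}\tau_m(e_ve_w)\to 0$, i.e.\ projections of trace $t$ that are only \emph{approximately} orthogonal along the edges. To conclude $\sup\{t:f_q(t)=0\}\le 1/\xi_f(G)$ one must clean these up: from $\|p_w^{(m)}p_v^{(m)}\|\to 0$ on each edge, a perturbation argument for almost-orthogonal systems of projections (cut down by spectral projections of the overlap operators $\sum_{w\sim v}p_vp_wp_v$, then correct to exact orthogonality) produces genuine $t'$-systems with $t'\ge t-o(1)$; together with the convexity of $f_q$ on $[0,1]$ (weighted direct sums realise convex combinations), so that $\{t:f_q(t)=0\}=[0,\beta]$, this forces $\beta\le 1/\xi_f(G)$, while the reverse inequality follows from explicit $D/r$-representations. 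I expect this cleaning-up step to be the crux of the whole argument.
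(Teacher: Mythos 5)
The paper states this proposition without proof (it is offered in the introduction as a consequence of the definitions of the three ranks), so there is no argument of the authors to compare yours against; judged on its own, your ``bridge'' (positivity of $\tau(e_ve_w)=\tau(e_ve_we_v)$, attainment of the infimum by weak-$*$ compactness in the $loc$ and $qc$ cases, and faithfulness of the trace vector state on the GNS von Neumann algebra to upgrade $\tau(e_ve_we_v)=0$ to $\pi(e_v)\pi(e_w)=0$) is correct, and your treatments of (1) and (3), including the identification of abelian tracial states with measures on $\{0,1\}^V$ supported on independent sets and the direct-sum-with-$\mathbb C$ trick to sweep out all smaller values of $t$, are essentially complete.

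The gap is in (2), and it sits exactly where you say the crux is. From $\sum_{(v,w)\in E}\tau_m(e_ve_w)\to 0$ you assert ``$\|p_w^{(m)}p_v^{(m)}\|\to 0$ on each edge''; this does not follow -- trace-smallness of $p_vp_wp_v$ is compatible with $\|p_vp_w\|=1$ on a small-trace corner. The spectral cut-down you mention parenthetically (replace $p_v$ by the spectral projection of $a_v=p_v\bigl(\sum_{w\sim v}p_w\bigr)p_v$ for $[0,\delta]$, losing trace at most $\tau(a_v)/\delta$) is precisely what converts trace-smallness into norm-smallness, so the right device is named, but the subsequent ``correct to exact orthogonality'' step is not carried out, and it is not routine: you must orthogonalize only along edges while non-adjacent projections may overlap arbitrarily, so a greedy scheme such as $q_k'=(e_k\vee q_k)-e_k$ with $e_k=\bigvee\{q_j':j<k,\,j\sim k\}$ needs quantitative control showing $q_k'$ is norm-close to $q_k$ (so that later products along edges stay small and $e_m\wedge q_m=0$, which is what preserves the traces), with constants depending only on the graph as $\delta\to0$. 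Without some such lemma -- or an equivalent lifting argument -- the inequality $\sup\{t:f_q(t)=0\}\le 1/\xi_f(G)$ is unproved, and this inequality is the entire content of (2), since the reverse direction is easy. A smaller loose end in the same part: after approximating the trace weights $\lambda_i$ on $\bigoplus_i M_{d_i}$ by rationals and amplifying into a single $M_D$, the resulting projections need no longer have exactly equal trace, so ``common trace gives common rank'' does not apply verbatim; one should cut each image down to the minimum rank, which costs only $o(1)$ and then genuinely yields a $D/r$-representation. None of this suggests your route is wrong -- the needed perturbation lemma is true and your outline points at it -- but as written the proof of (2) is a sketch with one false intermediate claim, not a proof.
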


It is well known that the fractional chromatic number gives a lower bound on the {\it chromatic number}, $\chi(G)$, of the graph.  The two other ranks were introduced to give lower bounds on two quantum versions of the chromatic numbers. Man\v{c}inska and Roberson \cite{Ro} proved that their projective rank is a lower bound on the standard quantum chromatic number of a graph, $\chi_q(G)$. In \cite{PT} several variations of the standard quantum chromatic number were introduced, including the commuting quantum chromatic number $\chi_{qc}(G)$, and in \cite{PSSTW} it was shown that the tracial rank of a graph is a lower bound on $\chi_{qc}(G)$.

Thus, in a certain sense, these functions measure how small one can keep this total correlation of the traces once one has gone beyond the point where it can be 0. 

There are many other reasons for studying these functions.  We will show later that if Connes' embedding conjecture has an affirmative answer then necessarily, $f_{q}(t)=f_{qc}(t)$, for all $0 \le t \le 1$ and for every graph. Thus, attempting to compute these functions may give us some insight into this conjecture.  These graphs are also related to Tsirelson's conjectures about various models for quantum probability densities. 

We will prove that if the set of quantum correlations is closed for $|G|=n$ inputs and 2 outputs, then necessarily the function $f_q(t)$ is ``piecewise'' linear for vertex and edge transitive graphs.  The core of our proof that $C_q(5,2)$ is not closed is then to show that for the complete graph on five vertices, the function $f_q(t)$ is not piecewise linear.

\section{Preliminaries}
Recall that a {\it positive operator valued measure} ({\it POVM}) is a set $\{R_i\}_{i=1}^k$ of positive operators on some Hilbert space $\cl H$ with $\sum_{i=1}^k R_i=I$. Also a {\it projection valued measure} ({\it PVM}) is a set $\{P_i\}_{i=1}^k$ of projections on some Hilbert space $\cl H$ with $\sum_{i=1}^k P_i=I$. Clearly every PVM is a POVM.

\begin{defn}\label{loc-def}
The set $C_{loc}(n,k)$ is the closed convex hull of all product distributions $\big( p(i,j|v,w) \big), 1 \le v,w \le n, 1 \le i,j \le k$ given by \begin{align*}
p(i,j|v,w)=p^1(i|v)p^2(j|w),
\end{align*} where for $\ell=1,2$, $p^\ell(i|v)\geq 0$ satisfy $\sum_{i=1}^kp^\ell(i|v)=1$, namely, form
a set of $k$-outcome probability distributions indexed by $1\leq w\leq n$. Elements of $C_{loc}(n,k)$ are called \textit{classical correlations}.
\end{defn}

\begin{defn}\label{q-def}
An $n^2k^2$-tuple, $\big( p(i,j|v,w) \big), 1 \le v,w \le n, 1 \le i,j \le k$, is called a {\it quantum correlation} if there exist PVMs $\{P_{v,i}\}_{i=1}^k$ and $\{Q_{w,j}\}_{j=1}^k$ in finite dimensional Hilbert spaces $\cl H_A$ and $\cl H_B$, respectively, together with a unit vector $h\in \cl H_A\otimes \cl H_B$ such that \begin{align*}
p(i,j|v,w) = \langle (P_{v,i}\otimes Q_{w,j})h,h \rangle.
\end{align*} The set of all such tuples $\big( p(i,j|v,w) \big)$ arising from all choices of finite dimensional Hilbert spaces $\cl H_A, \cl H_B$, all PVMs and all unit vectors $h$ is called the set of quantum correlations and is denoted by $C_q(n,k)$.
\end{defn}

If we relax Definition \ref{q-def} by removing the restriction of finite dimensionality on the Hilbert spaces $\cl H_A$ and $\cl H_B$, but keeping everything the same, we get a larger set of correlations called the set of \textit{spatial quantum correlations}, denoted by $C_{qs}(n,k)$.

\begin{defn}\label{qc-def}
An $n^2k^2$-tuple, $\big( p(i,j|v,w) \big), 1 \le v,w \le n, 1 \le i,j \le k$, is called a {\it commuting quantum correlation} if there exist PVMs $\{P_{v,i}\}_{i=1}^k$ and $\{Q_{w,j}\}_{j=1}^k$ in a single (possibly infinite dimensional) Hilbert space $\cl H$ satisfying $P_{v,i}Q_{w,j}=Q_{w,j}P_{v,i}$ (hence the name commuting) together with a unit vector $h\in \cl H$ such that \begin{align*}
p(i,j|v,w) = \langle (P_{v,i}Q_{w,j})h,h \rangle.
\end{align*} The set of all such tuples $\big( p(i,j|v,w) \big)$ arising from all choices of Hilbert space $\cl H$, all PVMs and all unit vectors $h$ is called the set of commuting quantum correlations denoted by $C_{qc}(n,k)$.
\end{defn}

\begin{remark}
If we replace PVMs by POVMs in Definitions \ref{q-def} and \ref{qc-def} we still get the same correlation sets. For the $r=q$ case the equivalence can be shown using a Naimark dilation argument, while the $r=qc$ case is more difficult and a proof can be found in \cite{PT}. This can also be found in Proposition
3.4 of \cite{Fritz}, and also as Remark 10 of \cite{JNPPSW}.
\end{remark}

\begin{remark}\label{remark-q-in-qc}
We have that $C_{loc}(n,k)\subseteq C_q(n,k)\subseteq C_{qc}(n,k)$ for all $n,k\in \bb N$ and these are characterized as follows. By Theorem 5.3 in \cite{PSSTW}, an $n^2k^2$-tuple $(p(i,j|v,w))$ belongs to $C_q(n,k)$ if and only if $(p(i,j|v,w))\in C_{qc}(n,k)$ and has a realization as described in Definition~\ref{qc-def} where the Hilbert space $\mathcal{H}$ in its realization is finite dimensional.
Similarly, by Remark 5.4 in \cite{PSSTW}, a tuple $(p(i,j|v,w))$ belongs to $C_{loc}(n,k)$ if and only if $(p(i,j|v,w))\in C_{qc}(n,k)$ and all the operators in its realization commute.
\end{remark}

There are two other sets of probabilistic correlations that we wish to consider. 

\begin{defn}\label{vec-corr-def}
We call an $n^2k^2$-tuple, $\big( p(i,j|v,w) \big), 1 \le v,w \le n, 1 \le i,j \le k$, a {\it vectorial correlation} provided that there is a Hilbert space $\cl H$ and vectors $x_{v,i}, y_{w,j}, h \in \cl H$, such that:
\begin{itemize}
\item $\|h\|=1$,
\item $\langle x_{v,i}, x_{v,j} \rangle =0, \forall v, \forall i \ne j$,
\item $\langle y_{w,i}, y_{w,j} \rangle =0, \forall w, \forall i \ne j$,
\item $h = \sum_i x_{v,i} = \sum_j y_{w,j}, \, \forall v,w$,
\item $p(i,j|v,w) = \langle x_{v,i}, y_{w,j} \rangle \ge 0, \, \forall v,w,i,j$.
\end{itemize}
We denote the set of all vectorial correlations by $C_{vect}(n,k)$.
\end{defn} 

Since all of the inner products appearing in the above definition are real, there is no generality lost in requiring $\cl H$ to be a real Hilbert space as well.

These correlations have been studied at other places in the literature, see for example \cite{NGHA} where they are referred to as {\it almost quantum correlations} and they are also essentially the first level of the NPA hierarchy \cite{NPA}.

\begin{defn}\label{def:ns}
We call an $n^2k^2$-tuple,  $\big( p(i,j|v,w) \big), 1 \le v,w \le n, 1 \le i,j \le k$, a {\it nonsignalling correlation} provided that:
\begin{itemize}
\item $p(i,j|v,w) \ge 0, \, \forall v,w,i,j$,
\item $\sum_{i,j} p(i,j|v,w) =1, \forall v,w$,
\item $\sum_j p(i,j|v, w) = \sum_j p(i,j|v,w^{\prime}), \, \forall i,v,w,w^{\prime},$   
\item $\sum_i p(i,j|v,w) = \sum_i p(i,j|v^{\prime}, w), \, \forall j,v,v^{\prime},w.$
\end{itemize}
We let $C_{ns}(n,k)$ denote the set of all such nonsignalling correlations. Finally, given a nonsignalling correlation, we set \begin{align*}
p_A(i|v) = \sum_j p(i,j|v,w), \qquad p_B(j|w) = \sum_i p(i,j|v,w),
\end{align*} and refer to these as the {\it marginal densities}. Note that these marginal densities make sense because of the last two properties of a nonsignalling correlation.
\end{defn}

All the correlation sets defined above are related in the following way: \begin{align}
C_{loc}(n,k)\subseteq C_q(n,k) \subseteq C_{qs}(n,k) \subseteq C_{qc}(n,k) \subseteq C_{vect}(n,k) \subseteq C_{ns}(n,k) \subseteq \bb R^{n^2k^2},
\end{align} for all $n,k\in \bb N$, and they are all convex sets \cites{Ts1, Fritz}. Notice that nonsignalling correlations are the largest set of tuples that behave like conditional probability densities and have well-defined marginal densities.

It is known \cites{Ts1, Fritz} that the sets $C_{loc}(n,k), C_{qc}(n,k), C_{vect}(n,k)$ and $C_{ns}(n,k)$ are all closed sets in $\bb R^{n^2k^2}$ for all $n,k\in \bb N$, while $C_q(n,k)$ and $C_{qs}(n,k)$ are not closed for some large values of $n,k$ as shown by Slofstra in \cite{Sl17}. Set $C_{qa}(n,k)=\overline{C_q(n,k)}$. Thus, we have
\begin{align*}
C_q(n,k)\subseteq C_{qs}(n,k) \subseteq C_{qa}(n,k) \subseteq C_{qc}(n,k).
\end{align*} Note that, from the work of Slofstra in \cite{Sl17}, $C_q(n,k)$ and $C_{qs}(n,k)$ are proper subsets of $C_{qa}(n,k)$ for some value of $n$ and $k$. Whether or not they are different for all values of $n,k$ is unknown. From the work in \cite{JNPPSW} and \cite{Oz13} it is known that $C_{qa}(n,k)=C_{qc}(n,k)$ for all $n,k\in \bb N$ is equivalent to Connes' embedding conjecture.

\begin{remark}\label{rem:autom}
For each permutation $\pi$ of $\{1,\ldots,n\}$, we have the corresponding affine self-map $\beta_\pi$ of $C_{ns}(n,k)$ given by \begin{align*}
\beta_\pi:\big(p(i,j|v,w)\big)\mapsto\big(p(i,j|\pi^{-1}(v),\pi^{-1}(w))\big).
\end{align*} These form an action of the group $S_n$ on $C_{ns}(n,k)$. By restriction, they induce actions on $C_r(n,k)$ for $r\in\{loc,q,qa,qc,vect\}$. To see that these restrictions are indeed self-maps, for $r=ns$, given systems $(x_{v,i})$ and $(y_{w,j})$ of vectors that realize a given vectorial correlation $p=(p(i,j|v,w))$, the systems $(x_{\pi^{-1}(v),i})$ and $(y_{\pi^{-1}(w),j})$ of vectors realize $\beta_\pi(p)$. Similarly, for $r\in\{loc,q,qs,qc\}$, applying permutations to systems of projections that realize a given $p\in C_r(n,k)$ show $\beta_\pi(p)\in C_r(n,k)$. The case of $r=qa$ now follows by taking closures.
\end{remark}

\begin{remark}\label{rem:refl}
Exactly analogously to the in the previous remark, we get an action $\sigma\mapsto\gamma_\sigma$ of $S_k$ on $C_r(n,k)$ for each $r\in\{loc,q,qs,qa,qc,vect,ns\}$ given by \begin{align*}
\gamma_\sigma:\big(p(i,j|v,w)\big)\mapsto\big(p(\sigma^{-1}(i),\sigma^{-1}(j)|v,w)\big).
\end{align*} We will use this only in the case $k=2$, when for the order-two transposition $\sigma:0\leftrightarrow1$, we get the {\em reflection} $R=\gamma_\sigma$.
\end{remark}

A correlation $\big( p(i,j|v,w) \big)$ is called {\it synchronous} if $p(i,j|v,v)=0$ for all $1\leq v\leq n$ and for all $i\neq j$. For $r\in \{loc,q,qa,qs,qc,vect,ns\}$, we let $C_r^s(n,k)$ denote the subset of all synchronous correlations. These synchronous subsets are themselves convex and they satisfy \begin{align}\label{syn-hier}
C^s_{loc}(n,k) \subseteq C^s_q(n,k) \subseteq C_{qs}^s(n,k) \subseteq C^s_{qa}(n,k) \subseteq C^s_{qc}(n,k) \subseteq C^s_{vect}(n,k) \subseteq C^s_{ns}(n,k),
\end{align} with all of the containments known to be proper for some values of $n,k$ (see, for example, \cites{PT, PSSTW}), except for the case $C^s_{qa}(n,k) \subseteq C^s_{qc}(n,k)$, where equality for all values of $n$ and $k$ is known, by~\cite{DP16} (see also~\cite{KPS}),
to be equivalent to Connes' embedding conjecture. All of these synchronous subsets are known (see~\cite{PT}) to be closed sets for all $n,k\in \bb N$, except $C_q^s(n,k)$ and $C_{qs}^s(n,k)$.

\begin{remark}\label{rem:R}
The action $\beta$ and the reflection $R$ from Remarks~\ref{rem:autom} and~\ref{rem:refl}
restrict to actions on $C_r^s(n,k)$, for each $r\in\{loc,q,qs,qa,qc,vect,ns\}$.
\end{remark}

We now provide some characterization of these synchronous subsets.

\begin{thm}[Theorem 5.5, \cite{PSSTW}]\label{qcsyn}
Let $(p(i,j|v,w))\in C_{qc}^s(n,k)$ be realized with PVMs $\{P_{v,i}\}_{i=1}^k$ and $\{Q_{w,j}\}_{j=1}^k$ in some $B(\cl H)$ satisfying $P_{v,i}Q_{w,j}=Q_{w,j}P_{v,i}$ and with some unit vector $h\in\mathcal{H}$ such that $p(i,j|v,w)=\left\langle P_{v,i}Q_{w,j} h,h \right\rangle$. 
Then \begin{enumerate}
\item $P_{v,i}h=Q_{v,i}h, \; \forall v,i$,
\item $p(i,j|v,w)=\langle (P_{v,i}P_{w,j})h,h\rangle=\langle (Q_{w,j}Q_{v,i})h,h\rangle=p(j,i|w,v)$,
\item Let $\mathcal{A}$ be the $C^*$-algebra in $B(\cl H)$ generated by the family $\{P_{v,i}\}_{v,i}$ and define $\tau:\mathcal{A}\rightarrow\bb C$ by $\tau(X)=\left\langle Xh,h\right\rangle$. Then $\tau$ is a tracial state on $\mathcal{A}$ and $p(i,j|v,w)=\tau(P_{v,i}P_{w,j})$.
\end{enumerate} Conversely, let $\mathcal{A}$ be a unital $C^*$-algebra equipped with a tracial state $\tau$ and with $\{e_{v,i}\}_{v,i}\subset \mathcal{A}$ a family of projections such that $\sum_{i=1}^ke_{v,i}=1$ for all $v$. Then $(p(i,j|v,w))$ defined by $p(i,j|v,w)=\tau(e_{v,i}e_{w,j})$ is an element of $C_{qc}^s(n,m)$. That is, there exists a Hilbert space $\mathcal{H}$, a unit vector $h\in \mathcal{H}$ and mutually commuting PVMs $\{P_{v,i}\}_i$ and $\{Q_{w,j}\}_j$ on $\mathcal{H}$ such that \begin{align*}
p(i,j|v,w)=\langle (P_{v,i}Q_{w,j})h,h\rangle=\langle (P_{v,i}P_{w,j})h,h\rangle=\langle (Q_{w,j}Q_{v,i})h,h\rangle
\end{align*}
\end{thm}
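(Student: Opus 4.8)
The plan is to handle the two directions separately. For the forward implication, everything follows from pushing the synchronicity condition onto the vector $h$. Since $P_{v,i}$ and $Q_{v,j}$ commute and are projections, $P_{v,i}Q_{v,j}$ is again a projection, so $0=p(i,j|v,v)=\langle P_{v,i}Q_{v,j}h,h\rangle=\|P_{v,i}Q_{v,j}h\|^2$ gives $P_{v,i}Q_{v,j}h=0$ for $i\ne j$. Expanding $P_{v,i}h=P_{v,i}\big(\sum_j Q_{v,j}\big)h$ kills every term but $j=i$, and the same with the roles of $P$ and $Q$ reversed; this yields (1), namely $P_{v,i}h=Q_{v,i}h\,(=P_{v,i}Q_{v,i}h)$. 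Statement (2) is then a short manipulation: replacing $Q_{w,j}h$ by $P_{w,j}h$ inside $\langle P_{v,i}Q_{w,j}h,h\rangle$ and using self-adjointness gives $p(i,j|v,w)=\langle P_{v,i}P_{w,j}h,h\rangle=\langle P_{w,j}h,P_{v,i}h\rangle$; running the identical computation with $(v,i)$ and $(w,j)$ interchanged produces $p(j,i|w,v)=\langle P_{v,i}h,P_{w,j}h\rangle$, which is the complex conjugate of the previous quantity and hence, the entries being real, equal to $p(i,j|v,w)$. Rewriting $P_{\bullet}h=Q_{\bullet}h$ once more recovers the $Q$-side expressions.

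The substantive step is (3), the trace property of $\tau(X)=\langle Xh,h\rangle$ on the $C^*$-algebra $\mathcal{A}$ generated by $\{P_{v,i}\}$. The engine is a word-reversal identity: by induction on word length, with (1) as the base case and the commutation of every $P_{v,i}$ with every $Q_{w,j}$ used at each inductive step, one shows $P_{v_1,i_1}\cdots P_{v_m,i_m}\,h=Q_{v_m,i_m}\cdots Q_{v_1,i_1}\,h$. Extending linearly, this says the order-reversing map $A\mapsto A'$ sending a polynomial in the $P$'s to the corresponding reversed polynomial in the $Q$'s satisfies $Ah=A'h$, and it is clearly $*$-preserving. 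For polynomials $A,B$ in the $P$'s one then computes $\tau(AB)=\langle AB'h,h\rangle=\langle B'Ah,h\rangle$ (since $A\in\mathcal A$ commutes with $B'$, a polynomial in the $Q$'s) $=\langle Ah,(B')^*h\rangle=\langle Ah,B^*h\rangle=\langle BAh,h\rangle=\tau(BA)$, and this extends to all of $\mathcal A$ by norm density; being a vector state, $\tau$ is positive and unital, hence a tracial state, and $p(i,j|v,w)=\tau(P_{v,i}P_{w,j})$ by (2). I expect this to be the part requiring the most care: the word-reversal lemma must be set up precisely, and one should note that $h$ need not be cyclic for $\mathcal A$ — only ever are operators tested against $h$, so no cyclicity is invoked.

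For the converse, given $(\mathcal A,\tau)$ with projections $e_{v,i}$ satisfying $\sum_i e_{v,i}=1$, I would pass to the GNS Hilbert space $\mathcal H$ of $\tau$ with cyclic vector $h=\hat 1$, so that $\tau(a)=\langle\pi(a)h,h\rangle$ for $\pi$ the left-regular representation. Because $\tau$ is tracial, the formula $\rho(a)\hat b=\widehat{ba^*}$ defines a $*$-homomorphism $\rho$ of $\mathcal A$ into $B(\mathcal H)$ whose range commutes with $\pi(\mathcal A)$ — the trace identity is exactly what forces $\rho$ to be a $*$-map, and the adjoint in the definition is what makes it a homomorphism rather than an anti-homomorphism and what makes its range commute with $\pi(\mathcal A)$. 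Setting $P_{v,i}=\pi(e_{v,i})$ and $Q_{w,j}=\rho(e_{w,j})$ gives mutually commuting families with $\sum_i P_{v,i}=\pi(1)=I$ and $\sum_j Q_{w,j}=\rho(1)=I$; since a sum of projections equal to $1$ forces the summands to be pairwise orthogonal, $e_{v,i}e_{v,j}=0$ for $i\ne j$, which is synchronicity. Finally $\langle P_{v,i}Q_{w,j}h,h\rangle=\langle\widehat{e_{v,i}e_{w,j}},\hat 1\rangle=\tau(e_{v,i}e_{w,j})=p(i,j|v,w)$, and likewise $\langle P_{v,i}P_{w,j}h,h\rangle=\langle Q_{w,j}Q_{v,i}h,h\rangle=\tau(e_{v,i}e_{w,j})$, so this direction is essentially bookkeeping once $\rho$ is in hand.
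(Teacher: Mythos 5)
Your proposal is correct, but note that the paper itself gives no proof of this statement---it is quoted verbatim with a citation to Theorem~5.5 of~\cite{PSSTW}. Your argument (the projection trick $\|P_{v,i}Q_{v,j}h\|^2=p(i,j|v,v)=0$ for (1)--(2), the word-reversal identity $P_{v_1,i_1}\cdots P_{v_m,i_m}h=Q_{v_m,i_m}\cdots Q_{v_1,i_1}h$ for the trace property, and the GNS construction with the commuting right-multiplication representation $\rho(a)\hat b=\widehat{ba^*}$ for the converse) is essentially the standard proof given in that reference, and all steps check out.
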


This theorem and Remark \ref{remark-q-in-qc} lead to the following characterization of $C_{loc}^s(n,k)$ and $C_q^s(n,k)$.

\begin{cor}[Corollary 5.6, \cite{PSSTW}]\label{cor-q-loc}
We have that $(p(i,j|v,w)) \in C_q^s(n,k)$ (respectively, $C_{loc}(n,k)$) if and only if there exists a finite dimensional (respectively, abelian) $C^*$-algebra $\mathcal{A}$ with a tracial state $\tau$ and with a generating family $\{e_{v,i}:1\leq v\leq n, 1\leq i\leq k\}\subseteq \mathcal{A}$ of projections such that $\sum_{i=1}^ke_{v,i}=1$ for all $v$ and $p(i,j|v,w)=\tau(e_{v,i}e_{w,j})$ for all $i,j,v,w$.
\end{cor}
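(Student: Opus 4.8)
The plan is to read this corollary off from Theorem~\ref{qcsyn} together with Remark~\ref{remark-q-in-qc}, the only work being to track the additional finite-dimensionality (respectively, commutativity) constraint as it passes through the constructions in those two results. I would write out the $C_q^s(n,k)$ statement in full; the $C_{loc}(n,k)$ statement is obtained by everywhere replacing ``finite-dimensional Hilbert space'' and ``finite-dimensional $C^*$-algebra'' by ``all operators mutually commute'' and ``abelian $C^*$-algebra,'' as indicated at the end.

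For the forward direction, suppose $p=(p(i,j|v,w))\in C_q^s(n,k)$. By definition $p$ has a realization $p(i,j|v,w)=\langle(P_{v,i}\otimes Q_{w,j})h,h\rangle$ with PVMs on finite-dimensional Hilbert spaces $\cl H_A,\cl H_B$, and viewing $\{P_{v,i}\otimes I\}$ and $\{I\otimes Q_{w,j}\}$ as commuting PVMs on the finite-dimensional space $\cl H=\cl H_A\otimes\cl H_B$ exhibits $p$ as a synchronous commuting correlation with a finite-dimensional realization. Now I would apply part~(3) of Theorem~\ref{qcsyn}: the $C^*$-algebra $\cl A\subseteq B(\cl H)$ generated by $\{P_{v,i}\otimes I\}_{v,i}$, together with $\tau(X)=\langle Xh,h\rangle$, is a tracial state with $p(i,j|v,w)=\tau\big((P_{v,i}\otimes I)(P_{w,j}\otimes I)\big)$. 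Since $\dim\cl H<\infty$, the algebra $\cl A$ is finite-dimensional; it is generated by the projections $e_{v,i}:=P_{v,i}\otimes I$, which satisfy $\sum_{i=1}^k e_{v,i}=1$ for every $v$ and $p(i,j|v,w)=\tau(e_{v,i}e_{w,j})$. This is exactly the required data.

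For the converse, suppose we are given a finite-dimensional $C^*$-algebra $\cl A$ with a tracial state $\tau$ and a generating family of projections $\{e_{v,i}\}$ with $\sum_i e_{v,i}=1$ and $p(i,j|v,w)=\tau(e_{v,i}e_{w,j})$. The converse half of Theorem~\ref{qcsyn} gives $p\in C_{qc}^s(n,k)$; moreover the Hilbert space produced there is the GNS space of $(\cl A,\tau)$, and the two commuting PVMs $\{P_{v,i}\}$ and $\{Q_{w,j}\}$ are left and right multiplication by the $e_{v,i}$ and $e_{w,j}$ respectively. When $\cl A$ is finite-dimensional this GNS space is finite-dimensional, so we have a finite-dimensional commuting realization of $p$, and Remark~\ref{remark-q-in-qc} then yields $p\in C_q(n,k)$; since $p$ is synchronous, $p\in C_q^s(n,k)$.

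For the $C_{loc}$ statement, I would run the same two steps. In the forward direction one uses that, by Remark~\ref{remark-q-in-qc}, $p\in C_{loc}(n,k)$ has a commuting realization in which \emph{all} the operators commute; then the algebra $\cl A=C^*(\{P_{v,i}\})$ produced by Theorem~\ref{qcsyn}(3) is abelian, and a commutative $C^*$-algebra generated by the PVMs $\{P_{v,i}\}_{v,i}$ is automatically finite-dimensional, being spanned by the mutually orthogonal projections $\prod_{v}P_{v,i_v}$ over the finitely many tuples $(i_v)\in\{1,\dots,k\}^n$. In the converse direction, when $\cl A$ is abelian the left and right multiplication operators on its GNS space all commute with one another, so the realization output by Theorem~\ref{qcsyn} is an all-commuting one, and Remark~\ref{remark-q-in-qc} gives $p\in C_{loc}(n,k)$. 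The substantive input is Theorem~\ref{qcsyn} and Remark~\ref{remark-q-in-qc}, which may be used freely; the only delicate point — and hence the part to write carefully — is that the converse in Theorem~\ref{qcsyn} is stated as producing merely \emph{some} Hilbert space, so one must look at its proof, the usual GNS construction, to confirm that finite-dimensionality (respectively, commutativity) of $\cl A$ is inherited by the realization it outputs.
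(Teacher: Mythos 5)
Your derivation is correct and follows exactly the route the paper indicates: the corollary is quoted from \cite{PSSTW} without proof, with the remark that it ``follows from'' Theorem~\ref{qcsyn} together with Remark~\ref{remark-q-in-qc}, which is precisely how you argue, including the right observation that the GNS realization in the converse of Theorem~\ref{qcsyn} inherits finite-dimensionality (respectively, commutativity) from $\mathcal{A}$. No gaps; the extra finite-dimensionality remark in the abelian case is harmless but not needed.
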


\begin{remark}\label{syn-vect-corr}
In \cite{PT}, it is shown that if the collection of vectors $x_{v,i}, y_{w,j}, h\in \cl H$ as in Definition \ref{vec-corr-def} define a synchronous vectorial correlation, then necessarily, $x_{v,i} =y_{v,i}$, for all $v, i$.
\end{remark}

\begin{thm}[Theorem 3.6, \cite{KPS}]
We have that $\overline{C_q^s(n,k)} = C_{qa}^s(n,k)$ for all $n,k\in\bb N$.
\end{thm}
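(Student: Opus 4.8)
The plan is to prove the two inclusions separately. The inclusion $\overline{C_q^s(n,k)}\subseteq C_{qa}^s(n,k)$ is the routine one: from $C_q^s(n,k)\subseteq C_q(n,k)$ one gets $\overline{C_q^s(n,k)}\subseteq\overline{C_q(n,k)}=C_{qa}(n,k)$, and since the equations $p(i,j|v,v)=0$ for $i\ne j$ cut out a closed set, every element of $\overline{C_q^s(n,k)}$ is synchronous, so it lies in $C_{qa}^s(n,k)$.

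For the reverse inclusion I would work in the tracial picture of Theorem~\ref{qcsyn} and Corollary~\ref{cor-q-loc}. Let $\cl A$ denote the universal unital $C^*$-algebra generated by projections $e_{v,i}$ ($1\le v\le n$, $1\le i\le k$) with $\sum_i e_{v,i}=1$ for every $v$ (so $\cl A=\cstar$ when $k=2$), and let $\Phi$ carry a tracial state $\tau$ on $\cl A$ to the tuple $(\tau(e_{v,i}e_{w,j}))_{i,j,v,w}\in\bb R^{n^2k^2}$. Since each coordinate of $\Phi$ is evaluation at a fixed element of $\cl A$, $\Phi$ is continuous for the weak-$*$ topology on the (weak-$*$ compact) trace space $T(\cl A)$, and by Corollary~\ref{cor-q-loc} it maps the set $T_{\mathrm{fd}}\subseteq T(\cl A)$ of finite-dimensional tracial states onto $C_q^s(n,k)$. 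Compactness of $T(\cl A)$ together with continuity of $\Phi$ then yields $\overline{C_q^s(n,k)}=\overline{\Phi(T_{\mathrm{fd}})}=\Phi(\overline{T_{\mathrm{fd}}})$, the bar on $T_{\mathrm{fd}}$ being the weak-$*$ closure. Thus the whole problem reduces to showing: every $p\in C_{qa}^s(n,k)$ has the form $\Phi(\tau)$ for some tracial state $\tau$ on $\cl A$ that is a weak-$*$ limit of finite-dimensional traces (equivalently, for some amenable, i.e. hyperlinear, trace $\tau$).

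To produce such a $\tau$, write $p=\lim_m p_m$ with $p_m\in C_q(n,k)$, and by Remark~\ref{remark-q-in-qc} fix finite-dimensional realizations: a finite-dimensional Hilbert space $\cl H_m$, PVMs $\{P^{(m)}_{v,i}\}_i$ and $\{Q^{(m)}_{w,j}\}_j$ in $B(\cl H_m)$ with $P^{(m)}_{v,i}Q^{(m)}_{w,j}=Q^{(m)}_{w,j}P^{(m)}_{v,i}$, and a unit vector $h_m$ with $p_m(i,j|v,w)=\langle P^{(m)}_{v,i}Q^{(m)}_{w,j}h_m,h_m\rangle$. Since $p$ is synchronous, $\varepsilon_m:=\max_v\sum_{i\ne j}p_m(i,j|v,v)\to 0$, and the identity $\sum_i\|P^{(m)}_{v,i}h_m-Q^{(m)}_{v,i}h_m\|^2=2\sum_{i\ne j}p_m(i,j|v,v)$ (the quantitative content of Theorem~\ref{qcsyn}(1)) makes $P^{(m)}_{v,i}h_m$ and $Q^{(m)}_{v,i}h_m$ uniformly close. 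Let $\alpha_m\colon\cl A\to B(\cl H_m)$ be the $*$-homomorphism with $\alpha_m(e_{v,i})=P^{(m)}_{v,i}$ and define states $\phi_m$ on $\cl A$ by $\phi_m(a)=\langle\alpha_m(a)h_m,h_m\rangle$. Using $P^{(m)}_{v,i}h_m\approx Q^{(m)}_{v,i}h_m$ and that the $Q$'s commute with the $P$'s to move one factor across $h_m$, one sees that for every fixed pair of words $a,b$ both $\phi_m(e_{v,i}e_{w,j})-p_m(i,j|v,w)\to0$ and $\phi_m(ab)-\phi_m(ba)\to0$. Hence any weak-$*$ cluster point $\tau$ of $(\phi_m)$ along a free ultrafilter $\cl U$ is a tracial state on $\cl A$ with $\Phi(\tau)=p$. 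It then remains to see that $\tau\in\overline{T_{\mathrm{fd}}}$: writing $\phi_m(a)=\mathrm{Tr}(\alpha_m(a)\rho_m)$ with $\rho_m=|h_m\rangle\langle h_m|$, replacing $\rho_m$ by its trace-preserving conditional expectation onto the finite-dimensional algebra $\alpha_m(\cl A)$ and then by the central part of the result, one obtains genuine finite-dimensional traces $\phi_m'$ on $\cl A$ with $\lim_{\cl U}\phi_m'=\tau$, provided these replacements cost only $o(1)$.

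The place I expect the real work is this last proviso: one must show that the uniform estimate on $\phi_m(e_{v,i}a)-\phi_m(a\,e_{v,i})$ --- which says $\rho_m$ asymptotically commutes with each of the $nk$ generating projections of $\alpha_m(\cl A)$ --- upgrades to $\rho_m$ being asymptotically \emph{central} in $\alpha_m(\cl A)$, with error tending to $0$, so that averaging $\rho_m$ over the unitary group of $\alpha_m(\cl A)$ changes $\phi_m$ by $o(1)$. This is the step where finite-dimensionality of the approximants is genuinely used and where the non-tracial vector state appearing in the $C_q$-realizations has to be tamed; it is the main obstacle. One can also sidestep it by invoking a known description of $C_{qa}(n,k)$ via representations on Connes-embeddable von Neumann algebras and specialising to the synchronous case. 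Everything else above --- continuity of $\Phi$, the reduction of $\overline{C_q^s}$ to a closure of finite-dimensional traces, and the construction of $\tau$ out of Theorem~\ref{qcsyn} --- is soft.
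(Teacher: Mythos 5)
Your first inclusion and your construction of a tracial state $\tau$ on $\cl A$ with $\Phi(\tau)=p$ are correct and standard: the identity $\sum_i\|P^{(m)}_{v,i}h_m-Q^{(m)}_{v,i}h_m\|^2=2\sum_{i\ne j}p_m(i,j|v,v)$ does give state-level approximate traciality of $\phi_m$ on words of bounded length, and a weak-$*$ cluster point is a trace reproducing $p$. But note that this only re-proves $p\in C^s_{qc}(n,k)$, which is already known; the entire content of the hard inclusion $C^s_{qa}(n,k)\subseteq\overline{C^s_q(n,k)}$ sits in the final ``proviso,'' which you acknowledge you cannot verify, and the mechanism you sketch for it is not merely unproven but false as stated. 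Approximate traciality of $\phi_m$ against fixed words does \emph{not} mean that $\rho_m=|h_m\rangle\langle h_m|$ asymptotically commutes with the generators in any norm that would make averaging over the unitary group of $\alpha_m(\cl A)$ an $o(1)$ perturbation: one computes $[P^{(m)}_{v,i},\rho_m]=|w\rangle\langle h_m|-|h_m\rangle\langle w|$ with $w=P^{(m)}_{v,i}h_m-\langle P^{(m)}_{v,i}h_m,h_m\rangle h_m$, whose trace norm is $2\sqrt{t-t^2}$ (and operator norm $\sqrt{t-t^2}$), bounded away from $0$ for $t\in(0,1)$ regardless of how small the synchronicity defect is; closeness of $P^{(m)}_{v,i}h_m$ to $Q^{(m)}_{v,i}h_m$ says nothing about $P^{(m)}_{v,i}h_m$ being nearly parallel to $h_m$. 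So the averaging/centrality step collapses, and with it the claim $\tau\in\overline{T_{\mathrm{fd}}}$. The suggested sidestep --- quoting a description of $C_{qa}$ via Connes-embeddable or amenable-trace models and specializing to the synchronous case --- is circular: such a description of the synchronous part of $C_{qa}$ is precisely what Theorem 3.6 of \cite{KPS} establishes.

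The way the gap is actually closed in \cite{KPS} (the paper here only cites that result, it does not reprove it) is to use that the approximants $p_m$ lie in $C_q$, i.e.\ admit \emph{tensor-product} finite-dimensional realizations, not merely commuting ones. This lets one view each $\phi_m$ as a state on $\cl A\otimes_{\min}\cl A^{op}$ via $a\otimes b^{op}\mapsto\langle\alpha_m(a)\beta_m(b)h_m,h_m\rangle$; a weak-$*$ cluster point is then a state on the \emph{minimal} tensor product which, by the synchronous limit argument you already ran, is of the form $a\otimes b^{op}\mapsto\tau(ab)$. Min-continuity of this functional is exactly Kirchberg's characterization of $\tau$ being an amenable trace, and amenability then does the work you wanted from centrality: it provides u.c.p.\ maps $\varphi_m\colon\cl A\to\bb M_{d_m}$ with $\operatorname{tr}_{d_m}\circ\varphi_m\to\tau$ pointwise and asymptotic multiplicativity in $2$-norm, after which a routine perturbation lemma replaces the almost-PVMs $(\varphi_m(e_{v,i}))_i$ by genuine PVMs changing the relevant traces only by $o(1)$, so $p=\Phi(\tau)\in\overline{C^s_q(n,k)}$. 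Some substitute of this kind (or a genuinely quantitative ``almost synchronous implies close to synchronous'' stability theorem, which is substantially harder) is indispensable; without it your argument does not reach the stated theorem.
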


\section{Basic Properties of the Graph Correlation Function}\label{sec:basicprops}
In this section we define the graph correlation functions $f_r$ and we prove some basic facts about their behaviour.

For each $t\in[0,1]$, we consider the slice
\[
\Gamma_r(t)=\{(p(i,j|v,w))\in C_r^s(n,2) : p_A(0|v) = p_B(0|w) =t, \, \forall v,w\}
\]
of $C_r^s(n,2)$, where $p_A$ and $p_B$ are the marginals from Definition~\ref{def:ns}.
We observe that each $\Gamma_r(t)$ is nonempty and convex.

Given a graph $G=(V,E)$ on $n$ vertices, we consider the affine function $F$ on $C_{ns}(n,2)$ given by
\[
F:(p(i,j|v,w))\mapsto\sum_{(v,w) \in E} p(0,0|v,w)
\]
For each $r \in \{ loc, q, qa, qc, vect, ns\}$ and $t\in[0,1]$, we let
\begin{equation}\label{f-def}
f_r(t)=\inf\{F(p):p\in\Gamma_r(t)\}.
\end{equation}

By Theorem \ref{qcsyn} and Corollary \ref{cor-q-loc} this new definition of $f_r(t)$ agrees with the one defined using Equation~\eqref{f-qc-def} and its variants, when $r\in\{loc,q,qc\}$. Moreover, the inclusions \eqref{syn-hier} imply that
\begin{align}\label{f-hier}
f_{loc}(t) \ge f_q(t)  =  f_{qa}(t) \ge f_{qc}(t) \ge f_{vect}(t) \ge f_{ns}(t)\geq 0.
\end{align} Notice that for $r\in \{loc, qa, qc, vect, ns\}$, the set $C_r^s(n,k)$ is closed and thus the infimum in \eqref{f-def} is attained for all $0\leq t\leq 1$.

\begin{prop}\label{f-ns}
If $G=(V,E)$ is a graph on $n$ vertices, then \begin{align*}
f_{ns}(t) = \begin{cases} 
0 &\text{ if } 0\leq t\leq \frac{1}{2}, \\
|E|(2t-1) &\text{ if } \frac{1}{2} \leq t\leq 1.
\end{cases}
\end{align*}
\end{prop}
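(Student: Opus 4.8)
The plan is to compute $f_{ns}(t)$ by producing a matching lower bound and explicit extremal correlations. For the lower bound, I would first observe the elementary two-output identity: for a nonsignalling correlation in $C_{ns}(n,2)$, each pair $(v,w)$ with $v\neq w$ satisfies the marginal constraints $p(0,0|v,w)+p(0,1|v,w)=p_A(0|v)$ and $p(0,0|v,w)+p(1,0|v,w)=p_B(0|w)$, together with nonnegativity of all four entries and the fact that they sum to $1$. With $p_A(0|v)=p_B(0|w)=t$ on the slice $\Gamma_{ns}(t)$, solving these gives $p(1,1|v,w)=1-2t+p(0,0|v,w)\ge 0$, hence $p(0,0|v,w)\ge \max\{0,\,2t-1\}$ for every edge $(v,w)$. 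Summing over the $|E|$ ordered edge-pairs yields $F(p)\ge |E|\max\{0,2t-1\}$, which is exactly $0$ for $t\le \tfrac12$ and $|E|(2t-1)$ for $t\ge\tfrac12$. This gives the desired lower bound on $f_{ns}(t)$.

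For the upper bound I need, for each $t$, an element of $\Gamma_{ns}(t)$ achieving this value. For $0\le t\le\tfrac12$ I would take the product correlation $p(i,j|v,w)=p_A(i|v)p_B(j|w)$ with $p_A(0|v)=p_B(0|w)=t$; here $p(0,0|v,w)=t^2$, which is not what I want, so instead I would use a correlation that is \emph{maximally anticorrelated} on the $(0,0)$ entry: set $p(0,0|v,w)=0$ for all $v\neq w$, $p(0,1|v,w)=p(1,0|v,w)=t$, $p(1,1|v,w)=1-2t$, and on the diagonal put the synchronous values $p(0,0|v,v)=t$, $p(1,1|v,v)=1-t$, $p(0,1|v,v)=p(1,0|v,v)=0$. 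One checks directly that this is nonsignalling, synchronous, lies in $\Gamma_{ns}(t)$, and has $F(p)=0$. For $\tfrac12\le t\le 1$ I would instead force $p(1,1|v,w)=0$ off the diagonal, i.e. $p(0,0|v,w)=2t-1$, $p(0,1|v,w)=p(1,0|v,w)=1-t$, with the same diagonal values as before; this again is easily verified to be a synchronous nonsignalling correlation in $\Gamma_{ns}(t)$, and $F(p)=|E|(2t-1)$. (Alternatively, for $t\ge\tfrac12$ one could convex-combine the fully correlated correlation $p(i,j|v,w)=\delta_{ij}p_A(i|v)$ at $t=1$-type data with the $t=\tfrac12$ construction, but writing the density explicitly is cleaner.)

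The main obstacle, such as it is, is purely bookkeeping: checking that the explicit tuples I write down genuinely satisfy all four defining conditions of $C_{ns}(n,2)$ — nonnegativity, normalization, and the two marginal-consistency conditions — and that they are synchronous, i.e. that the diagonal entries I chose are consistent with the off-diagonal ones under the nonsignalling marginal conditions. There is no deep content here; the only mild subtlety is making sure the diagonal blocks (which are pinned down by synchronicity together with $p_A(0|v)=t$) do not accidentally violate nonsignalling when read against the off-diagonal blocks, which is why I specify all of $p(i,j|v,w)$ and not just the $(0,0)$ entries. Once both bounds are in hand, combining them gives the stated formula for $f_{ns}(t)$ on all of $[0,1]$.
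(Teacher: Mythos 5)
Your proposal is correct and follows essentially the same route as the paper: derive the pointwise bound $p(0,0|v,w)\ge\max\{0,2t-1\}$ on each edge from the marginal/nonsignalling constraints on the slice, then exhibit an explicit synchronous nonsignalling correlation attaining it (the paper writes the same extremal choice via its equations \eqref{eq:psoln}--\eqref{eq:pineq}). Your explicit off-diagonal densities check out, so no gap.
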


\begin{proof}
Since $C_{ns}^s(n,2)$ is a closed set, given $t\in [0,1]$ there exists a correlation $(p(i,j|v,w))\in C_{ns}^s(n,2)$ such that $p_A(0|v)=p_B(0|w)=t$ for all $v,w\in V$ and $f_{ns}(t)=\sum_{(v,w)\in E}p(0,0|v,w)$. Since $p_A(0|v)=t$, $\sum_{i,j}p(i,j|v,v)=1$, and using the fact that the correlation is synchronous, we have \begin{align}\label{eq:rel1}
p(0,0|v,v) = t, \qquad p(0,1|v,v)=p(1,0|v,v)=0, \qquad p(1,1|v,v)=1-t.
\end{align} If $(v,w)\in E$, then using the nonsignalling conditions with Equations \eqref{eq:rel1} we get the equations \begin{align*}
p(0,0|v,w)+p(0,1|v,w) &= p(0,0|v,w)+p(1,0|v,w) = t, \\
p(0,1|v,w)+p(1,1|v,w) &= p(1,0|v,w)+p(1,1|v,w) = 1-t,
\end{align*} which have the solution,
\begin{equation}\label{eq:psoln}
\begin{gathered}
p(0,1|v,w) = p(1,0|v,w) = t- p(0,0|v,w), \\
p(1,1|v,w) = 1-2t+p(0,0|v,w).
\end{gathered}
\end{equation} Since these are probabilities we must also have \begin{align*}
p(0,0|v,w) \geq 0, \qquad t- p(0,0|v,w)\geq 0, \qquad 1-2t+p(0,0|v,w)\geq 0,
\end{align*} which yields
\begin{equation}\label{eq:pineq}
\max\{0,2t-1\}\leq p(0,0|v,w)\leq t.
\end{equation} Furthermore, choosing any values for $p(0,0|v,w)$ such that~\eqref{eq:pineq} and~\eqref{eq:rel1} are satisfied and then assigning the other values of $p(i,j|v,w)$
using~\eqref{eq:psoln}, we do get an element of $C_{ns}^s(n,2)$.
This shows that the choice 
\begin{align*}
p(0,0|v,w) = \max\{0,2t-1\} = \begin{cases} 0 &\text{ if } 0\leq t\leq \frac{1}{2} \\ 2t-1 &\text{ if } \frac{1}{2}\leq t\leq 1\end{cases},
\end{align*} yields an element of $C_{ns}^s(n,2)$, whereby the desired value of $f_{ns}(t)$ is attained.
\end{proof}

The following proposition tells that it suffices to describe the functions $f_r$ on the interval $\left[0,\frac{1}{2}\right]$.

\begin{prop}\label{convex}
Let $G=(V,E)$ be a graph on $n$ vertices.
Then $f_{r}$ is a convex function for all $r\in \{loc, q, qa, qc, vect, ns\}$, and \begin{align}\label{symm-about-half}
f_r(1-t)=|E|(1-2t)+f_r(t), \qquad t\in [0,1].
\end{align}
\end{prop}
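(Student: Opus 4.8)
The plan is to deduce both statements from structure already in place: each slice $\Gamma_r(t)$ is convex, the functional $F$ is affine, and the reflection $R$ of Remark~\ref{rem:refl} interchanges the two output symbols. I expect the argument to be routine, the only delicate point being the bookkeeping for $R$.

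First I would prove convexity. Fix $t_0,t_1\in[0,1]$ and $\lambda\in[0,1]$, and put $t=\lambda t_0+(1-\lambda)t_1$. Given $\varepsilon>0$, choose $p_\ell\in\Gamma_r(t_\ell)$ with $F(p_\ell)\le f_r(t_\ell)+\varepsilon$ for $\ell=0,1$. Since $C_r^s(n,2)$ is convex, $p:=\lambda p_0+(1-\lambda)p_1$ lies in $C_r^s(n,2)$; it is synchronous because $p_0,p_1$ are, and its marginals are $p_A(0|v)=p_B(0|w)=\lambda t_0+(1-\lambda)t_1=t$, so $p\in\Gamma_r(t)$. Affineness of $F$ gives $F(p)=\lambda F(p_0)+(1-\lambda)F(p_1)\le\lambda f_r(t_0)+(1-\lambda)f_r(t_1)+\varepsilon$, hence $f_r(t)\le\lambda f_r(t_0)+(1-\lambda)f_r(t_1)+\varepsilon$, and letting $\varepsilon\to0$ finishes it. (Using $\varepsilon$-minimizers avoids having to discuss whether the infimum in~\eqref{f-def} is attained.)

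Next I would establish~\eqref{symm-about-half}. Let $R=\gamma_\sigma$ with $\sigma\colon0\leftrightarrow1$, so that $R(p)(i,j|v,w)=p(\sigma^{-1}(i),\sigma^{-1}(j)|v,w)=p(1-i,1-j|v,w)$; by Remark~\ref{rem:R}, $R$ restricts to an involution of $C_r^s(n,2)$ for each $r$ in the list. For $p\in\Gamma_r(t)$, $R(p)$ is again synchronous, and summing over outputs gives $R(p)_A(0|v)=p_A(1|v)=1-t$ and similarly $R(p)_B(0|w)=1-t$; thus $R(p)\in\Gamma_r(1-t)$, and since $R$ is an involution it is a bijection of $\Gamma_r(t)$ onto $\Gamma_r(1-t)$. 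The linear relations derived in the proof of Proposition~\ref{f-ns}, in particular~\eqref{eq:psoln}, hold for every $p\in\Gamma_r(t)$ (they use only synchronicity, the nonsignalling conditions, and the value $t$ of the marginals); applied to an edge $(v,w)\in E$ they give $p(1,1|v,w)=1-2t+p(0,0|v,w)$. Hence
\[
F\big(R(p)\big)=\sum_{(v,w)\in E}p(1,1|v,w)=|E|(1-2t)+F(p),
\]
and taking the infimum over $p\in\Gamma_r(t)$, using that $R$ maps $\Gamma_r(t)$ bijectively onto $\Gamma_r(1-t)$, yields $f_r(1-t)=|E|(1-2t)+f_r(t)$.

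The only step that needs care — the main (and essentially only) obstacle — is verifying that $R$ restricts to a bijection $\Gamma_r(t)\to\Gamma_r(1-t)$: one must confirm that $R$ preserves synchronicity and membership in $C_r^s(n,2)$ (the latter being exactly Remark~\ref{rem:R}) and correctly track its action on the marginals. Once that is in hand, both assertions follow from affineness of $F$, convexity of the slices, and the nonsignalling identity~\eqref{eq:psoln}.
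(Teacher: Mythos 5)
Your proof is correct and follows essentially the same route as the paper: convexity via convex combinations of the slices $\Gamma_r(t)$ together with affineness of $F$, and the symmetry~\eqref{symm-about-half} via the reflection $R$ of Remark~\ref{rem:refl} combined with the relations~\eqref{eq:rel1} and~\eqref{eq:psoln}. Your use of $\varepsilon$-minimizers is just a slightly more explicit rendering of the paper's set-level infimum argument, so there is nothing substantive to add.
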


\begin{proof}
By the convexity of $C_r^s(n,2)$, for each $t_1,t_2,\lambda\in[0,1]$, we have
\[
\lambda\Gamma_r(t_1)+(1-\lambda)\Gamma_r(t_2)\subseteq\Gamma_r(\lambda t_1+(1-\lambda)t_2).
\]
Applying $F$, we have
\begin{multline*}
\lambda F(\Gamma_r(t_1))+(1-\lambda)F(\Gamma_r(t_2))=F(\lambda\Gamma_r(t_1)+(1-\lambda)\Gamma_r(t_2)) \\
\subseteq F(\Gamma_r(\lambda t_1+(1-\lambda)t_2)).
\end{multline*}
Taking infima implies
\[
\lambda f_r(t_1)+(1-\lambda)f_r(t_2)
=\inf(F(\lambda\Gamma_r(t_1)+(1-\lambda)\Gamma_r(t_2)))
\ge f_r(\lambda t_1+(1-\lambda)t_2),
\]
namely, that $f_r$ is convex.

To prove~\eqref{symm-about-half}, we use the reflection map $R:C_r^s(n,2)\to C_r^s(n,2)$ described in Remarks~\ref{rem:refl} and~\ref{rem:R}.
Using~\eqref{eq:rel1} we see that $R$
maps $\Gamma_r^s(t)$ onto $\Gamma_r^s(1-t)$ and using~\eqref{eq:psoln} we see $F\circ R(p)=|E|(1-2t)+F(p)$ for every $p\in C_r^s(n,2)$. \end{proof}

Recall that given a graph, $G=(V,E)$, a {\it graph automorphism} is a bijective function $\pi: V \to V$ such that $(v,w) \in E$ if and only if $(\pi(v), \pi(w)) \in E$. We let $\text{Aut}(G)$ denote the group of all graph automorphisms of $G$. A graph is called {\it vertex transitive} if for every $v,w \in V$ there is a graph automorphism $\pi$ with $\pi(v)=w$. A graph is called {\it edge transitive} if for every $(v,w),(x,y)\in E$, there is a graph automorphism $\pi$ with $(\pi(v),\pi(w))=(x,y)$.

\begin{prop}\label{f-vertex-edge-transitive}
If $G=(V,E)$ is a vertex and edge transitive graph on $n$ vertices, then for every $r \in \{ loc, q, qa, qc, vect, ns\}$
and every $t\in[0,1]$, we have \begin{align*}
f_r(t)=\inf\{F(p):p\in\Gammat_r(t)\},
\end{align*} where \begin{align*}
\Gammat_r(t)=\big\{p=(p(i,j|v,w))\in\Gamma_r(t): p(0,0|v,w)=p(0,0|x,y),\; \forall\,(v,w),\,(x,y)\in E\big\}.
\end{align*}
\end{prop}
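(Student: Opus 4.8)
The plan is a standard symmetrization argument: given an arbitrary correlation in $\Gamma_r(t)$, we average it over the (finite) automorphism group of $G$ using the action $\beta$ of Remark~\ref{rem:autom}, and we check that the average lies in $\Gammat_r(t)$ and has the same value under $F$. Since $G$ has $n$ vertices, $\text{Aut}(G)$ is a finite subgroup of $S_n$. For each $\pi\in\text{Aut}(G)$, the affine map $\beta_\pi$ restricts to a self-map of $C_r^s(n,2)$ by Remark~\ref{rem:R}, and it preserves $\Gamma_r(t)$: the marginals transform as $(\beta_\pi p)_A(0\mid v)=p_A(0\mid\pi^{-1}(v))$ and $(\beta_\pi p)_B(0\mid w)=p_B(0\mid\pi^{-1}(w))$, so the conditions $p_A(0\mid v)=p_B(0\mid w)=t$ for all $v,w$ are preserved. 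Hence, for any $p\in\Gamma_r(t)$, the average
\[
\bar p=\frac{1}{|\text{Aut}(G)|}\sum_{\pi\in\text{Aut}(G)}\beta_\pi(p)
\]
lies in $\Gamma_r(t)$, using convexity of $C_r^s(n,2)$ (equivalently, of $\Gamma_r(t)$, as already noted).

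Next I would check that $F(\bar p)=F(p)$. For $\pi\in\text{Aut}(G)$ we have $F(\beta_\pi(p))=\sum_{(v,w)\in E}p(0,0\mid\pi^{-1}(v),\pi^{-1}(w))$; since $\pi$ is a graph automorphism, $(v,w)\mapsto(\pi^{-1}(v),\pi^{-1}(w))$ is a bijection of the (ordered-pair) edge set $E$, so this sum re-indexes to $F(p)$. Averaging over $\pi$ gives $F(\bar p)=F(p)$.

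Then I would show $\bar p\in\Gammat_r(t)$, i.e.\ that $(v,w)\mapsto\bar p(0,0\mid v,w)$ is constant on $E$. A change of summation index shows this function is $\text{Aut}(G)$-invariant on $V\times V$: for $\sigma\in\text{Aut}(G)$, replacing $\pi$ by $\pi\sigma$ gives $\bar p(0,0\mid\sigma(v),\sigma(w))=\frac{1}{|\text{Aut}(G)|}\sum_{\pi}p(0,0\mid\pi^{-1}\sigma(v),\pi^{-1}\sigma(w))=\bar p(0,0\mid v,w)$. Since $G$ is edge transitive, $\text{Aut}(G)$ acts transitively on $E$, so for any two edges $(v,w),(x,y)\in E$ there is $\pi$ with $(\pi(v),\pi(w))=(x,y)$, whence $\bar p(0,0\mid x,y)=\bar p(0,0\mid v,w)$. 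Thus $\bar p\in\Gammat_r(t)$.

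Combining the three points: for every $p\in\Gamma_r(t)$ there is $\bar p\in\Gammat_r(t)$ with $F(\bar p)=F(p)$, so $\inf\{F(p):p\in\Gamma_r(t)\}\ge\inf\{F(p):p\in\Gammat_r(t)\}$; the reverse inequality is immediate from $\Gammat_r(t)\subseteq\Gamma_r(t)$. Hence the two infima coincide, which is the assertion. I do not expect a genuine obstacle here; the only care points are the $\pi$-versus-$\pi^{-1}$ bookkeeping in the invariance computation and the verification that $\beta_\pi$ really restricts to $\Gamma_r(t)$ for each $r$ — but the latter is already recorded in Remarks~\ref{rem:autom} and~\ref{rem:R}. (Note that only edge transitivity is used in this argument; vertex transitivity enters the finer analysis of $f_r$ carried out later.)
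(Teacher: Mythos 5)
Your proof is correct and follows essentially the same route as the paper: averaging a given $p\in\Gamma_r(t)$ over $\operatorname{Aut}(G)$ via the maps $\beta_\pi$ of Remark~\ref{rem:autom}, noting that each $\beta_\pi$ (hence the average) preserves $F$ and that edge transitivity forces the averaged correlation to be constant on $E$, so the two infima agree; you merely spell out the details the paper leaves as ``not hard to show.'' Your parenthetical observation that, with the paper's (ordered-pair) notion of edge transitivity, vertex transitivity is not actually needed here is also accurate.
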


\begin{proof}
Using the convexity of $\Gamma_r(t)$ and the vertex and edge transitivity of the graph $G$, it is not hard to show that the map \begin{align*}
p\mapsto\frac1{|\text{Aut}(G)|}\sum_{\pi\in\text{Aut}(G)}\beta_\pi(p),
\end{align*} where the set map $\beta_\pi$ is described in Remark~\ref{rem:autom}, maps $\Gamma_r(t)$ into $\widetilde{\Gamma}_r(t)$.
Since each
$\beta_{\pi}$ leaves the function $F$ invariant, it follows that the above map also leaves $F$ invariant. But then by Equation \eqref{f-def} we get $f_r(t)=\inf\{F(p):p\in\Gammat_r(t)\}$.
\end{proof}

\begin{remark}
Combining Proposition \ref{f-vertex-edge-transitive} with our characterizations of synchronous commuting quantum correlations (Theorem \ref{qcsyn}) in terms of traces,
we see that for a vertex and edge transitive graph $G=(V,E)$, we have that $f_{qc}(t) = s$ where $s$ is the smallest value for which there exists a $C^*$-algebra $\cl A$ with a tracial state $\tau$ and projections $P_v \in \cl A$ such that  $\tau(P_v) =t, \, \forall v \in V$ and $\tau(P_vP_w) =\frac{s}{|E|}, \, \forall (v,w) \in E$.
\end{remark}

\begin{remark}\label{rem:psolve}
Let $r\in \{loc, q, qa, qc, vect, ns\}$ and let $(p(i,j|v,w))\in C_r^s(n,2)$ be such that $p_A(0|v)=p_B(0|w)=t$
for all $v,w\in V$ and $p(0,0|v,w)=\frac{s}{|E|}$ for all $(v,w)\in E$.
The synchronous condition implies $t=p_A(0|v)=p(0,0|v,v)+p(0,1|v,v)=p(0,0|v,v)$, so that \begin{align}\label{eq:rel2}
p(0,0|v,v)=t, \qquad p(0,1|v,v)=p(1,0|v,v) = 0, \qquad p(1,1|v,v) = 1-t.
\end{align} If $(v,w)\in E$, then using the nonsignalling conditions of Definition~\ref{def:ns} with Equations \eqref{eq:rel2} we must have
\begin{gather*}
p(0,0|v,w) = \frac{s}{|E|}, \qquad p(0,1|v,w) = p(1,0|v,w) = t-\frac{s}{|E|}, \\
p(1,1|v,w) = 1-2t+\frac{s}{|E|}.
\end{gather*}
Since these are probabilities, we must have \begin{align}\label{eq:constraint-on-s}
0 \leq \max\{0,2t-1\} \leq \frac{s}{|E|} \leq t.
\end{align}
\end{remark}

\begin{prop}\label{piecewise}
Let $G=(V,E)$ be a vertex and edge transitive graph on $n$ vertices and let $t\in [0,1]$ be irrational.
Suppose that the value of $f_q(t)$ is attained in the infimum~\eqref{f-def} defining it. Then there is a nondegenerate interval $[r,s]$ having rational endpoints such that $t\in[r,s]$ and the restriction of $f_q$ to $[r,s]$ is linear.
\end{prop}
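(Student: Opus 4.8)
The plan is to reduce, in a neighbourhood of $t$, the computation of $f_q$ to minimizing an affine function over a \emph{rational} polytope of tracial states on a single finite-dimensional $C^*$-algebra, and then to use rationality to extract a genuine linear piece. \emph{First, a finite-dimensional minimizer.} By hypothesis the infimum in~\eqref{f-def} is attained, so choose $p\in\Gamma_q(t)$ with $F(p)=f_q(t)$; by Corollary~\ref{cor-q-loc} it is realized by a finite-dimensional $C^*$-algebra $\cl A=\bigoplus_{k=1}^m M_{n_k}(\bb C)$, a tracial state $\tau^0=\sum_k\lambda_k^0\,\mathrm{tr}_{n_k}$ with $\lambda^0$ in the probability simplex $\Delta_m\subseteq\bb R^m$, and projections $P_v\in\cl A$ ($v\in V$), with $e_{v,0}=P_v$, $e_{v,1}=1-P_v$, such that $\tau^0(P_v)=t$ for all $v$ and $p(0,0|v,w)=\tau^0(P_vP_w)$. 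Write $P_v^{(k)}$ for the $M_{n_k}$-block of $P_v$ and put $a_v^{(k)}=\mathrm{tr}_{n_k}(P_v^{(k)})$ and $c_{vw}^{(k)}=\mathrm{tr}_{n_k}(P_v^{(k)}P_w^{(k)})$. The point that drives the argument is that $a_v^{(k)}=\mathrm{rank}(P_v^{(k)})/n_k$ is \emph{rational}, whereas the $c_{vw}^{(k)}$ are in general irrational. (If one prefers, Proposition~\ref{f-vertex-edge-transitive} lets one first take $p\in\Gammat_q(t)$, but this will not be needed below.)

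\emph{Varying only the trace.} Keep $\cl A$ and the family $(P_v)_{v\in V}$ fixed, and for $\lambda\in\Delta_m$ set $\tau_\lambda=\sum_k\lambda_k\,\mathrm{tr}_{n_k}$ and $p_\lambda(i,j|v,w)=\tau_\lambda(e_{v,i}e_{w,j})$; by Corollary~\ref{cor-q-loc}, $p_\lambda\in C_q^s(n,2)$. Fix a base vertex $v_0\in V$ and let
\[
Q=\Big\{\lambda\in\Delta_m:\ \textstyle\sum_k\lambda_k a_v^{(k)}=\sum_k\lambda_k a_{v_0}^{(k)}\ \text{for all }v\in V\Big\}.
\]
Since the coefficients $a_v^{(k)}$ are rational, $Q$ is a bounded polyhedron with rational defining data, hence a polytope all of whose vertices lie in $\bb Q^m$; and $\lambda^0\in Q$. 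For $\lambda\in Q$ both marginals of $p_\lambda$ equal $\tau_\lambda(P_\cdot)=t(\lambda):=\sum_k\lambda_k a_{v_0}^{(k)}$, so $p_\lambda\in\Gamma_q(t(\lambda))$ and therefore $F(p_\lambda)\ge f_q(t(\lambda))$, with equality at $\lambda^0$. Crucially, $\lambda\mapsto t(\lambda)$ is affine with \emph{rational} coefficients, while $g(\lambda):=F(p_\lambda)=\sum_k\lambda_k\big(\sum_{(v,w)\in E}c_{vw}^{(k)}\big)$ is affine but possibly with irrational ones.

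\emph{Passing to the plane and concluding.} The affine map $\Phi=(t,g):Q\to\bb R^2$ sends $Q$ to a convex polygon $\Phi(Q)$ whose vertices have rational first coordinates. Write $t(Q)=[\alpha,\beta]$ with $\alpha,\beta\in\bb Q$; here $\alpha<\beta$, since $t$ cannot be constant on $Q$ (its value at a vertex would then be the irrational number $t$). Then $\Phi(Q)$ lies weakly above the graph of $f_q|_{[\alpha,\beta]}$ while $(t,f_q(t))=\Phi(\lambda^0)$ lies on that graph, so the lower boundary $\ell^*(u)=\min\{g(\lambda):\lambda\in Q,\ t(\lambda)=u\}$ of $\Phi(Q)$ is convex, piecewise linear with \emph{rational} breakpoints, satisfies $\ell^*(t)=f_q(t)$, and dominates $f_q$ on $[\alpha,\beta]$ (as $\ell^*(u)=F(p_\lambda)$ for some $\lambda\in Q$ with $t(\lambda)=u$, and $p_\lambda\in\Gamma_q(u)$). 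Since $t$ is irrational and lies in the open interval $(\alpha,\beta)$, it lies strictly inside one of the linear pieces of $\ell^*$: there is a nondegenerate interval $[r,s]$ with rational endpoints, $t\in(r,s)$, on which $\ell^*$ coincides with an affine function $\ell$. Thus on $[r,s]$ the convex function $f_q$ (Proposition~\ref{convex}) satisfies $f_q\le\ell$ and $f_q(t)=\ell(t)$ at an interior point; writing $t=\mu r+(1-\mu)s$ with $\mu\in(0,1)$, convexity gives $\ell(t)=f_q(t)\le\mu f_q(r)+(1-\mu)f_q(s)\le\mu\ell(r)+(1-\mu)\ell(s)=\ell(t)$, which forces $f_q(r)=\ell(r)$ and $f_q(s)=\ell(s)$; a further convexity comparison at each $u\in(r,s)$ then yields $f_q(u)=\ell(u)$. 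Hence $f_q$ is linear on $[r,s]$.

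I expect the real work to be in the reduction steps: arranging the finite-dimensional realization and then checking that, as the trace varies over $Q$, the correlations $p_\lambda$ remain synchronous quantum correlations with equal marginals, so that the \emph{rational} constraints $\sum_k\lambda_k a_v^{(k)}=\sum_k\lambda_k a_{v_0}^{(k)}$ — and not the potentially irrational edge data $c_{vw}^{(k)}$ — are exactly what cut out $Q$. Once that is set up correctly, the remainder uses only the standard facts that rational polytopes have rational vertices and that a convex function touching an affine upper bound at an interior point of an interval must coincide with it on the whole interval.
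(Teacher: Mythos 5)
Your argument is correct, but it takes a genuinely different route from the paper's proof of Proposition~\ref{piecewise}. The paper symmetrizes the finite-dimensional minimizer over $\mathrm{Aut}(G)$: writing $\cl A=\bigoplus_l\bb M_{n_l}$ and $\tau=\sum_l\lambda_l\,\mathrm{tr}_{n_l}$, it replaces the block projections $P_{v,l}$ by $\bigoplus_{\pi\in\mathrm{Aut}(G)}P_{\pi(v),l}$, so that by vertex transitivity each block $l$ furnishes a feasible point for $f_q(r_l)$ at a \emph{rational} marginal $r_l$, with $t=\sum_l\lambda_l r_l$ and $f_q(t)\ge\sum_l\lambda_l f_q(r_l)$; convexity (Proposition~\ref{convex}) forces equality, and the equality case of Jensen's inequality together with the irrationality of $t$ yields linearity on an interval containing the $r_l$. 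You instead keep the projections fixed and vary only the central weights $\lambda$ of the trace over the rational polytope $Q$, push forward by the affine map $(t,g)$, and use that extreme points of $\Phi(Q)$ come from the (rational) vertices of $Q$ to produce a convex, piecewise-linear upper bound $\ell^*$ for $f_q$ with rational breakpoints that touches $f_q$ at the irrational point $t$; an explicit convexity comparison then forces $f_q$ to agree with the affine piece of $\ell^*$ through $t$. What each approach buys: the paper's proof is shorter and leans directly on the symmetry hypothesis, while yours never invokes vertex or edge transitivity, so it in fact proves the proposition for arbitrary graphs, and it replaces the appeal to the equality case of Jensen by a self-contained endgame. Two minor points you should state explicitly: if some $\lambda_k=0$, then $\tau_\lambda$ is still a (non-faithful) tracial state, or one restricts to the blocks with positive weight, so Corollary~\ref{cor-q-loc} still applies; and the rationality of the breakpoints of $\ell^*$ rests on the standard facts that every extreme point of the affine image of a polytope is the image of an extreme point, and that a polyhedron cut out by rational equalities and inequalities has rational vertices.
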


\begin{proof}
Since the value $f_q(t)$ is attained, there is a finite dimensional $C^*$-algebra $\cl A$ generated by projections
$\{P_v: v \in V\}$ and equipped with a trace $\tau: \cl A \to \bb C$ with $\tau(P_v) =t$ for all $v\in V$ and such that \begin{align*}
f_q(t) = \sum_{(v,w) \in E} \tau(P_vP_w).
\end{align*} Since $\cl A$ is finite dimensional, we may write $\cl A= \bigoplus_{l=1}^m \bb M_{n_l}$ and $\tau = \oplus_{l=1}^m \lambda_{l} \text{tr}_{n_l}$, where $\lambda_{l} >0$ with $\sum_{l=1}^m \lambda_l =1$, and where $\text{tr}_{n_l}: \bb M_{n_l} \to \bb C$ denotes the normalized trace on matrices, i.e., $\text{tr}_{n_l}([x_{i,j}]) = \frac{1}{n_l} \sum_{i=1}^{n_l} x_{i,i}$; moreover, we have
$P_v = \oplus_{l=1}^m P_{v,l}$  for projections $P_{v,l}\in \bb M_{n_l}$.
Let $\text{Aut}(G)$ denote the group of graph automorphisms of the graph $G$ and set $N= |\text{Aut}(G)|$. For $v\in V$ and $1\leq l\leq m$, set \begin{align*}
\widetilde{P}_{v,l} = \oplus_{\pi \in \text{Aut}(G)} P_{\pi(v), l} \in \bigoplus_{\pi \in \text{Aut}(G)} \bb M_{n_l}=: \cl A_l.
\end{align*}  Define a trace,  $\tau_l: \cl A_l \to \bb C$, by \begin{align*}
\tau_l\left( \oplus_{\pi\in \text{Aut}(G)} X_{\pi} \right) = \frac{1}{N} \sum_{\pi\in \text{Aut}(G)} \text{tr}_{n_l}(X_{\pi}).
\end{align*} Given any $v,w \in V$ if we fix $\rho \in \text{Aut}(G)$ such that $\rho(v)=w$, then \begin{align*}
\tau_l(\widetilde{P}_{w,l}) = \frac{1}{N} \sum_{\pi \in \text{Aut}(G)} \text{tr}_{n_l}(P_{\pi(w),l}) = \frac{1}{N}\sum_{\pi \in \text{Aut}(G)} \text{tr}_{n_l}(P_{\pi \rho(v), l}) = \tau_l(\widetilde{P}_{v,l}),
\end{align*} which is some fixed rational number $r_l$. After a permutation we may assume that these rational numbers $r_l$ are arranged in non-decreasing order.

Thus,  $\{ \widetilde{P}_{v,l}: v \in V \}$ is a feasible set for the definition of $f_q(r_l)$ and hence we have that \begin{align*}
f_q(r_l) \le \sum_{(v,w) \in E} \tau_l( \widetilde{P}_{v,l} \widetilde{P}_{w,l}).
\end{align*}

Now, we set $\widetilde{\cl A} = \oplus_{l=1}^m \cl A_l$, and define a normalized trace $\widetilde{\tau}: \widetilde{\cl A} \to \bb C$ by $\widetilde{\tau}(\oplus_{l=1}^m Y_l) = \sum_{l=1}^m \lambda_l \tau_l(Y_l)$. Define projections $\widetilde{P}_v$ in $\widetilde{\cl A}$ by $\widetilde{P}_v = \oplus_{l=1}^m \widetilde{P}_{v,l}$. Then we have that \begin{align*}
\widetilde{\tau}(\widetilde{P}_v) = \sum_{l=1}^m \lambda_l \tau_l(\widetilde{P}_{v,l}) = \frac{1}{N} \sum_{l=1}^m \sum_{\pi \in \text{Aut}(G)} \lambda_l \text{tr}_{n_l}(P_{\pi(v),l}) = 
\frac{1}{N} \sum_{\pi \in \text{Aut}(G)}   \tau(P_{\pi(v)}) = t,
\end{align*} while a similar calculation shows that $\sum_{(v,w) \in E} \widetilde{\tau}( \widetilde{P}_v \widetilde{P}_w) = f_q(t)$. Thus, \begin{align*}
f_q(t) = \sum_{(v,w)\in E}\sum_{l=1}^m \lambda_l \tau_l(\widetilde{P}_{v,l} \widetilde{P}_{w,l}) \ge \sum_{l=1}^m \lambda_l f_q(r_l).
\end{align*} By Proposition~\ref{convex}, $f_q$ is a convex function and so we have \begin{align*}
f_q(t) = \sum_{l=1}^m \lambda_l f_q(r_l),
\end{align*} and so we must have that $f_q(r_l) = \sum_{(v,w) \in E} \tau_l(\widetilde{P}_{v,l}\widetilde{P}_{w,l})$.

But this is exactly the equality case of Jensen's inequality, which holds if and only if either all the points in the convex combination are the same or the function is piecewise linear on an interval containing the points. Since $t$ is irrational, the points $r_l$ cannot all be same and this forces the function $f_q$ to be linear on an interval containing the points $r_l$.
\end{proof}

The following is straightforward to prove. See, for example, Proposition~5.2 of~\cite{DPP}.

\begin{lemma}\label{commuting-lemma}
Let $\mathcal{A}$ be a unital $C^*$-algebra with a faithful tracial state $\tau$. Let $A$ and $P$ be hermitian elements in $\mathcal A$. If $AP-PA\neq 0$, then there exists $H=H^*\in \mathcal{A}$ such that, letting $f(t)=\tau(A(e^{iHt}Pe^{-iHt}))$ for $t\in \bb R$, we have $f'(0)> 0$.
\end{lemma}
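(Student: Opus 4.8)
The plan is to take $H$ to be a scalar multiple of the commutator $AP-PA$ itself and then read off the sign of $f'(0)$ from a short computation using only the trace property of $\tau$ and its faithfulness. Set $C=AP-PA$, which is nonzero by hypothesis. Since $A$ and $P$ are hermitian, $C^{*}=P^{*}A^{*}-A^{*}P^{*}=PA-AP=-C$, so $C$ is skew-adjoint; consequently $H:=-iC=-i(AP-PA)$ satisfies $H^{*}=\overline{(-i)}\,C^{*}=i(-C)=-iC=H$, so this $H$ is a legitimate choice for the lemma.

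Next I would differentiate $f$ at $0$. The map $t\mapsto e^{iHt}Pe^{-iHt}$ is norm-differentiable with derivative $i\bigl[H,e^{iHt}Pe^{-iHt}\bigr]$, so, since $\tau$ is bounded and linear,
\[
f'(0)=\tau\bigl(A\cdot i[H,P]\bigr)=i\,\tau(AHP-APH).
\]
Applying the trace identity $\tau(AHP)=\tau(PAH)$ to the first summand gives
\[
f'(0)=i\,\tau\bigl((PA-AP)H\bigr)=-i\,\tau(CH),
\]
and substituting $H=-iC$ and simplifying $(-i)(-i)=-1$ yields $f'(0)=-\tau\!\left(C^{2}\right)$.

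Finally, since $C^{*}=-C$ we have $-C^{2}=(-C)C=C^{*}C\ge 0$, hence $f'(0)=\tau(C^{*}C)$. Because $C\neq 0$, the positive element $C^{*}C$ is nonzero (indeed $\|C^{*}C\|=\|C\|^{2}>0$), and the faithfulness of $\tau$ forces $\tau(C^{*}C)>0$; thus $f'(0)>0$, as desired. There is no serious obstacle here: the only points needing a little care are the justification of differentiating $f$ term by term (standard, as $t\mapsto e^{iHt}$ is norm-analytic) and the bookkeeping of the factors of $i$.
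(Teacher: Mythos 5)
Your proof is correct, and it is essentially the standard argument the paper has in mind (the paper simply cites Proposition~5.2 of~\cite{DPP}, which likewise takes $H=-i(AP-PA)$, differentiates the conjugation, and uses the trace property plus faithfulness to get $f'(0)=\tau(C^*C)>0$). The sign bookkeeping and the self-adjointness of $H$ check out, so nothing further is needed.
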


The following result is not used in the proofs of other results in this paper (however, see the Appendix, where a similar argument is used).
But it was in a sense the key result for our proof of Theorem~\ref{thm:notclosed}, because it led us to ask about
scalar multiples of the identity realized as sums of projections, and to find the results~\cite{KRS} of 
Kruglyak, Rabanovich, and Samo\u\i lenko.

\begin{prop}\label{localcommuting}
Let $G=(V,E)$ be a graph on $n$ vertices, and assume that $\tau: \cstar \to \bb C$ is a tracial state
(respectively, finite dimensional tracial state) such that $\tau(e_v) = t$ for all $v\in V$ and 
$f_{qc}(t)$
(respectively, $f_q(t)$) is equal to
$\sum_{(v,w) \in E} \tau(e_ve_w)$.
 Set $p_v = \sum_{\{w\,:\,(v,w) \in E\}} e_w$. If $\pi:\cstar \to B(\cl H)$ is the GNS representation of $\tau$, then $\pi(e_v)\pi(p_v) = \pi(p_v) \pi(e_v)$.
\end{prop}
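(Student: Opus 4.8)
## Proof Proposal

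The plan is to argue by contradiction: suppose $\pi(e_v)$ does not commute with $\pi(p_v)$ for some $v\in V$, and use the minimality of $f_{qc}(t)$ (respectively $f_q(t)$) to produce a competitor with strictly smaller total correlation, contradicting optimality. The key structural observation is that $\sum_{(v,w)\in E}\tau(e_ve_w)$, as a function of the family $\{e_v\}$, has "partial derivative" in the $v$-th slot governed by $\tau(e_v p_v)$, since $p_v$ is precisely the sum of the neighbours of $v$. Explicitly, if we conjugate only $e_v$ by a one-parameter unitary group $u_t = e^{iHt}$ fixing all other generators, the perturbed family $e_w^{(t)} = e_w$ for $w\neq v$ and $e_v^{(t)} = u_t e_v u_t^*$ still consists of projections, still satisfies $\tau(e_v^{(t)}) = \tau(e_v) = t$ by traciality, and the objective becomes $\sum_{(x,w)\in E}\tau(e_x^{(t)}e_w^{(t)}) = C + 2\tau\big(u_t e_v u_t^*\, p_v\big)$, where $C$ collects the edges not incident to $v$ and the factor $2$ accounts for each edge appearing as $(v,w)$ and $(w,v)$. (One must check $p_v$ commutes with nothing relevant here — we only need that $p_v$ is hermitian.)

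First I would set $A = \pi(p_v)$ and $P = \pi(e_v)$ inside $\mathcal{A} = \pi(\cstar)''$ or rather the GNS algebra with its vector state; since the GNS state of $\tau$ need not be faithful on $\cstar$ itself, I would pass to the von Neumann algebra $\pi(\cstar)''$ with the (now faithful, normal) extension of the vector state, so that Lemma~\ref{commuting-lemma} applies. If $AP \neq PA$, Lemma~\ref{commuting-lemma} produces a hermitian $H$ with $\frac{d}{dt}\Big|_{t=0}\tau\big(A\,e^{iHt}Pe^{-iHt}\big) > 0$; replacing $H$ by $-H$ we may instead arrange the derivative to be strictly \emph{negative}, so that for small $t>0$ the quantity $\tau\big(u_t e_v u_t^* p_v\big)$ is strictly less than $\tau(e_v p_v)$. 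Then the perturbed family $\{e_w^{(t)}\}$ is feasible for the infimum defining $f_{qc}(t)$ (resp. $f_q(t)$, noting finite-dimensionality is preserved under conjugation by a unitary in the same finite-dimensional algebra, where we take $H$ in that algebra) and achieves total correlation $f_{qc}(t) - 2\big(\tau(e_vp_v) - \tau(u_te_vu_t^*p_v)\big) < f_{qc}(t)$, a contradiction. Hence $AP = PA$, i.e. $\pi(e_v)\pi(p_v) = \pi(p_v)\pi(e_v)$, which is the claim for that $v$; since $v$ was arbitrary we are done.

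The main obstacle I anticipate is the faithfulness hypothesis in Lemma~\ref{commuting-lemma}: the GNS construction gives a faithful trace only on the GNS algebra $\pi(\cstar)$ modulo the kernel, so one must be careful to carry out the perturbation argument in the right algebra and then transport the conclusion back to the generators $e_v$. In the commuting ($qc$) case this means working in $\pi(\cstar)''$ with the normal extension of the vector functional, verifying it is a faithful normal tracial state there (using that the GNS vector is cyclic, hence separating for the commutant, hence the trace is faithful), and checking that conjugating $\pi(e_v)$ by $e^{iHt}$ for $H\in\pi(\cstar)''$ still yields a correlation in $\Gamma_{qc}(t)$ — this is fine because we may realize the perturbed projections together with the unchanged ones inside $B(\mathcal H)$ directly, producing a valid element of $C_{qc}^s(n,2)$ via Theorem~\ref{qcsyn}. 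In the finite-dimensional ($q$) case the trace $\tau$ is automatically faithful after compressing to the support projection, and $H$ can be chosen in the finite-dimensional algebra, so feasibility for $f_q(t)$ is immediate. A secondary routine point is the bookkeeping that only edges incident to $v$ change and that the doubling constant is exactly $2$, which follows from the convention that $|E|$ counts ordered pairs.
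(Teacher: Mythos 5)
Your argument is correct and takes essentially the same route as the paper: both proceed by contradiction, apply Lemma~\ref{commuting-lemma} to the noncommuting pair $\pi(e_v)$, $\pi(p_v)$, and perturb along a one-parameter unitary group $e^{iHt}$ to produce a feasible family with strictly smaller total correlation, contradicting the definition of $f_{qc}(t)$ (resp.\ $f_q(t)$). The only differences are cosmetic: the paper conjugates the generators $e_w$, $w\ne v$, instead of $e_v$ (the same correlation by traciality), and it works directly with the vector trace on $\pi(\cstar)$, which is already faithful because $\ker\pi$ is exactly the trace-null ideal, so your extra passage to $\pi(\cstar)''$ is a legitimate but unnecessary precaution.
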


\begin{proof}
Fix $v\in V$. Let $\pi:\cstar\to B(\cl H)$ be the GNS representation of $\tau$ with $\tau(a)=\langle \pi(a)\psi,\psi \rangle$ for all $a\in \cstar$ and for some cyclic vector $\psi\in \cl H$. Let $\cl B=\pi(\cstar)\subseteq B(\cl H)$ be the image $C^*$-algebra.
Suppose, for contradiction, that $\pi(e_v)$ and $\pi(p_v)$ do not commute. Then by Lemma \ref{commuting-lemma}, there exists $H=H^*\in \cl B$ (therefore $H=\pi(h)$, $h\in \cstar$) such that if 
\[f(t)=\left\langle \pi(e_v)(e^{iHt}\pi(p_v)e^{-iHt})\psi,\psi \right\rangle=\tau(e_v(e^{iht}\pi(p_v)e^{-iht})),\] 
then $f'(0)>0$. Fix some small and negative $t_0$ such that $f(t_0)<f(0)$.

Define for $y\in V$,
\[F_y  = \begin{cases}
\pi(e_v) &\text{ if } y=v, \\
e^{iHt_0}\pi(e_y)e^{-iHt_0} &\text{ if } y\neq v.
\end{cases}\]
 Then each $F_y$ is a projection in $\cl B$ and \begin{align*}
\left\langle F_y\psi,\psi \right\rangle = \left\langle \left(e^{iHt_0}\pi(e_y)e^{-iHt_0}\right)\psi,\psi \right\rangle = \tau\left(e^{iht_0}\pi(e_y)e^{-iht_0}\right) = \tau(e_y)= t.
\end{align*} But for this new set of projections, we have that \begin{align*}
\sum_{(x,y)\in E}&\left\langle F_xF_y\psi,\psi  \right\rangle \\
&=
\begin{aligned}[t]
\sum_{\{w\,:\,(v,w)\in E\}}\left\langle F_vF_w\psi,\psi \right\rangle
+ \sum_{\{w\,:\,(w,v)\in E\}}\left\langle F_wF_v\psi,\psi \right\rangle& \\
+ \sum_{\{(x,y)\in E\,:\,x\neq v,\,y\neq v\}} \left\langle F_xF_y\psi,\psi  \right\rangle&
\end{aligned} \displaybreak[2]\\
&=
\begin{aligned}[t]
\left\langle F_v\left( \sum_{\{w\,:\,(v,w)\in E\}} F_w\right) \psi,\psi\right\rangle
+\left\langle \left( \sum_{\{w\,:\,(w,v)\in E\}} F_w\right)F_v \psi,\psi\right\rangle& \\
+ \sum_{\{(x,y)\in E\,:\,x\neq v,\,y\neq v\}} \left\langle F_xF_y\psi,\psi  \right\rangle&
\end{aligned} \displaybreak[2]\\
&= 2\,\mathrm{Re}\,\left\langle \pi(e_v) \left( \sum_{\{w\,:\,(v,w)\in E\}}e^{iHt_0}\pi(e_w)e^{-iHt_0}\right) \psi,\psi \right\rangle \\
&\qquad + \sum_{\{(x,y)\in E\,:\,x\neq v,\,y\neq v\}} \left\langle \left(e^{iHt_0}\pi(e_x)e^{-iHt_0} \right) \left(e^{iHt_0}\pi(e_y)e^{-iHt_0} \right) \psi,\psi\right\rangle \displaybreak[2]\\
&= 2\,\mathrm{Re}\,\left\langle \pi(e_v) \left(e^{iHt_0}\pi(p_v)e^{-iHt_0}\right)\psi,\psi \right\rangle + \sum_{\{(x,y)\in E: x\neq v, y\neq v\}} \tau(e^{iht_0}e_xe_ye^{-iht_0}) \displaybreak[2]\\
&=2f(t_0) + \sum_{\{(x,y)\in E\,:\,x\neq v,\,y\neq v\}} \tau(e_xe_y) \\
&< 2f(0) + \sum_{\{(x,y)\in E\,:\,x\neq v,\,y\neq v\}} \tau(e_xe_y) \\ 
&= \tau(e_vp_v)+\tau(p_ve_v)+\sum_{\{(x,y)\in E\,:\,x\neq v,\,y\neq v\}} \tau(e_xe_y) \\
&= \sum_{(x,y)\in E}\tau(e_xe_y) = f_{qc}(t),
\end{align*} where we have used that $(v,w)\in E$ if and only if $(w,v)\in E$. This contradicts the definition of $f_{qc}$. \end{proof}

\begin{thm} \label{posmatrixcond}
Let $G=(V,E)$ be a vertex and edge transitive graph on $n$ vertices and let $t\in [0,1]$.
Then $f_{vect}(t)= s$, where $s$ is the smallest real number satisfying Equation~\eqref{eq:constraint-on-s} and for which there exists an $(n+1)\times (n+1)$ positive semidefinite matrix $P=[p_{i,j}]_{i,j=0}^n$ satsifying
\begin{itemize}
\item $p_{i,j} \ge 0, \forall i,j,$
\item $p_{0,0}=1, p_{i,i}=t, 1 \le i \le n$,
\item $p_{0,j}= p_{j,0} =t, 1 \le j \le n$
\item $p_{i,j} =\frac{s}{|E|}, \forall (i,j) \in E$.
\end{itemize}
\end{thm}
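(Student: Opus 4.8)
The plan is to translate the whole question into one about Gram matrices and then to exploit Proposition~\ref{f-vertex-edge-transitive}, the reflection of Remark~\ref{rem:refl}, and Proposition~\ref{convex}. By Proposition~\ref{f-vertex-edge-transitive} and Remark~\ref{rem:psolve}, $f_{vect}(t)$ is the least $s$ satisfying~\eqref{eq:constraint-on-s} for which $\Gammat_{vect}(t)$ contains a (synchronous) vectorial correlation with $p(0,0|v,w)=s/|E|$ on every edge, and for such a correlation $F(p)=s$. By Remark~\ref{syn-vect-corr} a synchronous vectorial correlation is given by vectors with $x_{v,i}=y_{v,i}$; writing $a_v=x_{v,0}$ and $h$ for the common value of $\sum_i x_{v,i}$, one has $x_{v,1}=h-a_v$, $\|h\|=1$, and the marginal and synchronicity conditions force $\langle a_v,a_v\rangle=\langle a_v,h\rangle=t$. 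First I would pass back and forth between such correlations and the $(n+1)\times(n+1)$ matrix $P$ which is the Gram matrix of $(h,a_1,\dots,a_n)$ (indices $0,1,\dots,n$): one gets $P\succeq0$, $p_{0,0}=1$, $p_{0,j}=p_{j,0}=p_{j,j}=t$, and $p_{v,w}=\langle a_v,a_w\rangle=p(0,0|v,w)$, and conversely every such $P$ is realized as a (real) Gram matrix. Since the remaining entries of the correlation are $p(0,1|v,w)=p(1,0|v,w)=t-p_{v,w}$ and $p(1,1|v,w)=1-2t+p_{v,w}$, nonnegativity of all four amounts exactly to $\max\{0,2t-1\}\le p_{v,w}\le t$ for all $v,w$, and the upper bound is automatic because $|p_{v,w}|\le(p_{v,v}p_{w,w})^{1/2}=t$ when $P\succeq0$.

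With this dictionary the inequality $f_{vect}(t)\ge s$ (where $s$ denotes the infimum in the theorem) is immediate: the Gram matrix of an optimal $p\in\Gammat_{vect}(t)$ satisfies all four bullet points with value $F(p)=f_{vect}(t)$. For the reverse, take the minimal $s$ and a matrix $P$ as in the theorem, realize $P$ as a Gram matrix of $(h,a_1,\dots,a_n)$, and put $x_{v,0}=y_{v,0}=a_v$, $x_{v,1}=y_{v,1}=h-a_v$. Every condition of Definition~\ref{vec-corr-def} then holds except possibly $p(1,1|v,w)=1-2t+p_{v,w}\ge0$; on edges this is $1-2t+s/|E|\ge0$ by~\eqref{eq:constraint-on-s}, and when $t\le\frac12$ it is automatic since $p_{v,w}\ge0\ge 2t-1$. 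This settles the theorem for $t\le\frac12$. For $t\ge\frac12$ I would reduce to that case: the reflection of Remark~\ref{rem:refl} is, on matrices, the congruence $P\mapsto P^{R}=LPL^{*}$ with $L$ invertible (sending the $0$th standard basis vector to itself and the $v$th to $e_0-e_v$), so it preserves positive semidefiniteness; explicitly $p^R_{0,0}=1$, $p^R_{0,v}=1-p_{0,v}$, $p^R_{v,w}=1-p_{0,v}-p_{0,w}+p_{v,w}$. It carries the structural conditions at parameter $1-t$ to those at parameter $t$, adds $2t-1$ to every vertex off-diagonal entry (so $p_{v,w}\ge 0$ becomes $p^R_{v,w}\ge 2t-1\ge 0$, i.e.\ a theorem-admissible matrix at $1-t$ reflects to one that even corresponds to a genuine vectorial correlation at $t$), and shifts the value by $(2t-1)|E|$; together with $f_{vect}(t)=f_{vect}(1-t)+(2t-1)|E|$ from Proposition~\ref{convex} and the $t\le\frac12$ case, this is intended to transfer the identity to $t$.

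The step I expect to be the main obstacle is exactly the nonnegativity of $p(1,1|v,w)$ on \emph{non}-edges when $t>\frac12$: the bullet points only force $p_{v,w}\ge0$ there, whereas a vectorial correlation needs $p_{v,w}\ge2t-1$, and positive semidefiniteness of the principal submatrix on $\{0,v,w\}$ only gives $p_{v,w}\ge t(2t-1)<2t-1$. Concretely, what is missing is that the reflection argument transfers only one inequality ($s(t)\le s(1-t)+(2t-1)|E|$) for free, and to get the other one must know that the minimizing matrix in the theorem at $t$ can be chosen with all vertex off-diagonal entries $\ge 2t-1$, not just $\ge0$. To handle this I would first average the optimal matrix over $\text{Aut}(G)$ — which, as $G$ is edge transitive, leaves the edge entries, and hence $F$, unchanged — and then argue that within the convex set of admissible non-edge entries compatible with this optimal edge value one can always pick them all to be $\ge 2t-1$ (equivalently, that reflecting a theorem-admissible matrix at $t$ produces, after a harmless upward adjustment of its non-edge entries, a theorem-admissible matrix at $1-t\le\frac12$), the adjustment being possible precisely because~\eqref{eq:constraint-on-s} bounds the edge entry away from below.
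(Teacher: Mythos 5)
Your Gram--matrix dictionary, the verification that $p_{0,0}=1$, $p_{0,v}=p_{v,v}=t$, $p_{v,w}=p(0,0|v,w)$, and the easy inequality obtained by taking the Gram matrix of an optimal element of $\Gammat_{vect}(t)$ (which exists by closedness and Proposition~\ref{f-vertex-edge-transitive}, with \eqref{eq:constraint-on-s} coming from Remark~\ref{rem:psolve}) are exactly the paper's proof, and your converse construction $x_{v,0}=a_v$, $x_{v,1}=h-a_v$ is also the paper's; your observation that $p_{v,w}\le t$ is automatic from positive semidefiniteness is correct. So for $t\le\frac12$ (and for complete graphs at any $t$, which is all the paper ever uses, cf.\ Proposition~\ref{f-vect-complete-graph}) your argument is complete and coincides with the paper's.

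The gap is the one you yourself flag: for $t>\frac12$ and a graph with at least one non-edge, the bullet conditions only force $p_{v,w}\ge0$ off the edges, while membership in $C^s_{vect}(n,2)$ forces $p(1,1|v,w)=1-2t+p_{v,w}\ge0$, i.e.\ $p_{v,w}\ge 2t-1$, on \emph{every} pair; as you note, the $3\times3$ principal minor on $\{0,v,w\}$ only yields $p_{v,w}\ge t(2t-1)<2t-1$. Your reflection argument transfers only the inequality $f_{vect}(t)\le s_{\min}(1-t)+|E|(2t-1)$ (which, with Proposition~\ref{convex}, is just the easy direction again); the needed converse requires showing that a minimizing bullet matrix at $t$ can be modified, keeping positive semidefiniteness, the border/diagonal, and the common edge value, so that all non-edge entries are $\ge 2t-1$. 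You assert this ("one can always pick them all to be $\ge 2t-1$") but give no mechanism: averaging over $\mathrm{Aut}(G)$ only makes the entries constant on orbits, and \eqref{eq:constraint-on-s} bounds the edge entry, not the non-edge entries, so as written this step is an unproved claim, not a proof. To be fair, the paper's own converse ("from which it is easy to construct a synchronous vectorial correlation") silently needs the same inequality $1-2t+p_{v,w}\ge0$ on non-edges and does not supply it either; the issue is invisible in the paper's applications because Theorem~\ref{posmatrixcond} is only invoked for $K_n$, where every pair is an edge and \eqref{eq:constraint-on-s} covers all entries. But judged as a proof of the theorem as stated, your proposal (like the paper's sketch) is incomplete precisely at this point, and you would need an actual argument --- for instance an explicit analysis of how raising non-edge entries interacts with the spectral constraints --- rather than the intention you describe.
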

\begin{proof}
Fix $t\in [0,1]$ and let $f_{vect}(t)=s$. Then $s$ must satisfy Equation \eqref{eq:constraint-on-s}.
Since $C_{vect}^s(n,2)$ is closed, by Proposition \ref{f-vertex-edge-transitive} there exists $(p(i,j|v,w))\in C_{vect}^s(n,2)$ such that $p_A(0|v)=p_B(0|w)=t$ for all $v,w\in V$ and $p(0,0|v,w)=\frac{s}{|E|}$ for all $(v,w)\in E$. By Definition \ref{vec-corr-def} and Remark \ref{syn-vect-corr} there exist vectors $\{h,x_{v,0},x_{v,1}\}\subseteq \cl H$ in some Hilbert space $\cl H$ such that \begin{align*}
\|h\|=1, \qquad \langle x_{v,0},x_{v,1} \rangle=0, \qquad h=x_{v,0}+x_{v,1}, \qquad p(i,j|v,w)=\langle x_{v,i},x_{w,j}\rangle.
\end{align*} Set $x_v=x_{v,0}$ and $y_v=x_{v,1}$. Let $x_0=h$ and let $P=[p_{v,w}]_{v,w=0}^n$ be the Grammian of vectors $\{x_0,x_1,\ldots,x_n\}$. Then this matrix is positive semidefinite and satisfies the properties stated in theorem.
For notice that for all $v\in V$ we have
\begin{align*}
\langle x_v,h \rangle &= \langle x_{v,0},x_{v,0}+x_{v,1} \rangle = p(0,0|v,v)+p(0,1|v,v) = p_A(0|v) = t, \\
\|x_v\|^2  &= \langle x_{v,0},x_{v,0} \rangle = \langle x_{v,0},h-x_{v,1} \rangle = \langle x_{v,0},h \rangle = t,
\end{align*} and for all $(v,w)\in E$ we have
\[
\langle x_v,x_w \rangle= \langle x_{v,0},x_{w,0} \rangle = p(0,0|v,w) = \frac{s}{|E|}.
\]

Conversely, given such a matrix $P$ there are vectors $\{x_0,\ldots,x_n\}$ such that $P$ is the Grammian of these vectors. Set $h= x_0$ and $y_v = x_0 - x_v$ for all $1\leq v\leq n$ and observe that $\langle x_v,y_v \rangle = \langle x_v,x_0 - x_v \rangle = p_{0,v} - p_{v,v} = t-t=0$, from which it is easy to construct a synchronous vectorial correlation.
\end{proof} 
  
\begin{prop}
Let $G=(V,E)$ be a graph on $n$ vertices. Then \begin{align*}
f_{q}\left(\frac{1}{2} \right) = f_{qa}\left(\frac{1}{2} \right) = f_{qc}\left(\frac{1}{2} \right) = f_{vect}\left(\frac{1}{2} \right).
\end{align*}
\end{prop}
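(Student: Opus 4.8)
The plan is to establish the single inequality $f_q\!\left(\tfrac12\right)\le f_{vect}\!\left(\tfrac12\right)$, since the reverse chain $f_q(t)=f_{qa}(t)\ge f_{qc}(t)\ge f_{vect}(t)$ is already recorded in~\eqref{f-hier}; together these will collapse all four quantities at $t=\tfrac12$. To get the inequality I will convert an optimal vectorial correlation at $t=\tfrac12$ into a \emph{finite-dimensional} synchronous quantum correlation with the same marginals and the same value of the objective function $F$.

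First I would use that $C_{vect}^s(n,2)$ is closed, so the infimum in~\eqref{f-def} defining $f_{vect}\!\left(\tfrac12\right)$ is attained at some $p\in\Gamma_{vect}\!\left(\tfrac12\right)$; by Definition~\ref{vec-corr-def} and Remark~\ref{syn-vect-corr} there are vectors $h,x_{v,0},x_{v,1}$ in a real Hilbert space $\cl H$ with $\|h\|=1$, $\langle x_{v,0},x_{v,1}\rangle=0$, $h=x_{v,0}+x_{v,1}$, and $p(i,j|v,w)=\langle x_{v,i},x_{w,j}\rangle$. Writing $x_v:=x_{v,0}$ one has $\langle x_v,h\rangle=\|x_v\|^2=p_A(0|v)=\tfrac12$, so decomposing $x_v=\tfrac12 h+z_v$ with $z_v\perp h$ gives $\|z_v\|^2=\tfrac14$ and $\langle z_v,z_w\rangle=p(0,0|v,w)-\tfrac14$. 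Hence the vectors $u_v:=2z_v$ lie in the finite-dimensional real Hilbert space $\operatorname{span}\{z_1,\dots,z_n\}\subseteq h^\perp$, are unit vectors (using the synchronous identity $p(0,0|v,v)=\tfrac12$ — it is precisely $t=\tfrac12$ that makes $\|z_v\|=\tfrac12$), and satisfy $\langle u_v,u_w\rangle=4\,p(0,0|v,w)-1$.

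Next I would invoke the classical Clifford/Jordan--Wigner construction: choosing an orthonormal basis of $\operatorname{span}\{u_1,\dots,u_n\}$ and representing its members by mutually anticommuting self-adjoint unitaries (tensor products of Pauli matrices) in a full matrix algebra $\cl A=\bb M_{2^m}$ with its normalized trace $\tau$, one gets self-adjoint unitaries $X_v\in\cl A$ with $X_v^2=1$, $\tau(X_v)=0$ and $\tau(X_vX_w)=\langle u_v,u_w\rangle$. Setting $e_v:=\tfrac12(1+X_v)$ produces a projection with $\{e_v,1-e_v\}$ a PVM and $\tau(e_v)=\tfrac12$, and a one-line computation gives, for $(v,w)\in E$,
\[
\tau(e_ve_w)=\tfrac14\bigl(1+\tau(X_v)+\tau(X_w)+\tau(X_vX_w)\bigr)=\tfrac14\bigl(1+\langle u_v,u_w\rangle\bigr)=p(0,0|v,w).
\]
By Corollary~\ref{cor-q-loc} the correlation $\bigl(\tau(e_{v,i}e_{w,j})\bigr)$ with $e_{v,0}=e_v$, $e_{v,1}=1-e_v$ lies in $C_q^s(n,2)$; its marginals equal $\tfrac12$, so it belongs to $\Gamma_q\!\left(\tfrac12\right)$, and by the displayed identity its $F$-value equals $\sum_{(v,w)\in E}p(0,0|v,w)=F(p)=f_{vect}\!\left(\tfrac12\right)$. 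Therefore $f_q\!\left(\tfrac12\right)\le f_{vect}\!\left(\tfrac12\right)$, as needed.

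I expect the only nontrivial ingredient to be the existence of anticommuting symmetries in a finite-dimensional algebra realizing a prescribed Gram matrix through the trace of their products, and this is classical and elementary (Pauli/Clifford matrices). The point worth emphasizing is that the rescaling $u_v=2z_v$ yields honest unit vectors exactly because $t=\tfrac12$ (so that $\|z_v\|^2=t(1-t)=\tfrac14$), which is why the equality is asserted only at that value; for $t\neq\tfrac12$ there is no reason the rescaled Gram matrix is realizable by symmetries, nor can projections of trace $t\neq\tfrac12$ be written as $\tfrac12(1+X)$ for a symmetry $X$.
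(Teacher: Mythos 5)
Your proposal is correct and follows essentially the same route as the paper: your rescaled vectors $u_v=2z_v$ are exactly the paper's $\widetilde x_v=2x_v-h$, and your Pauli/Jordan--Wigner construction of anticommuting symmetries with $e_v=\tfrac12(1+X_v)$ is precisely the paper's Clifford-algebra representation with $P_{v,0}=\tfrac12\bigl(I+C(\widetilde x_v)\bigr)$. The only cosmetic differences are that you pass through an optimal (attained) vectorial correlation and match only the marginals and the $F$-value, whereas the paper shows directly that every vectorial correlation with marginals $\tfrac12$ already lies in $C_q^s(n,2)$.
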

  
\begin{proof}
From the relations \eqref{f-hier}, it is sufficient to show that $f_q\left(\frac{1}{2} \right)=f_{vect}\left(\frac{1}{2} \right)$.

Let $(p(i,j|v,w))\in C_{vect}^s(n,2)$ be such that $p_A(0|v)=p_B(0|w)=\frac{1}{2}$. By Remark \ref{syn-vect-corr} there exist vectors $\{x_{v,0},x_{v,1},h\}\subset \cl H$ such that $p(i,j|v,w)=\langle x_{v,i},x_{w,j}\rangle$. Without loss of generality we may assume that $\cl H$ is a finite-dimensional real Hilbert space, say of dimension $m$. Set $x_v=x_{v,0}$ for all $v\in V$. Then $\frac{1}{2}=p_A(0|v)=\langle x_v,h \rangle$, and nonsignalling conditions yield, \begin{align*}
p(0,0|v,w) &= p(1,1|v,w) = \langle x_v,x_w \rangle \\
p(0,1|v,w) &= p(1,0|v,w) = \frac{1}{2}- \langle x_v,x_w \rangle.
\end{align*} Define $\widetilde{x}_v = 2x_v-h$ for all $v\in V$. It is easy to verify that each $\widetilde{x}_v$ is a unit vector, and \begin{align*}
p(i,j|v,w) = \frac{1}{4}\left(1+(-1)^{i+j}\langle \widetilde{x}_v,\widetilde{x}_w \rangle \right).
\end{align*}
Recall the representation of the Clifford algebra that is determined by a real linear map
$\cl H\ni x\mapsto C(x)\in\bb M_d$ for some $d$, where each $C(x)$ is self-adjoint and has trace zero and where they satisfy
$C(x)C(y)+C(y)C(x)=2\langle x,y\rangle I_d$.
Thus, when $x$ is a unit vector, $C(x)$ is a symmetry.
We let
\[
P_{v,i} = \frac{I+(-1)^iC(\widetilde x_v)}{2}.
\]
Then each $P_{v,i}$ is a projection and computation shows
\[
\text{tr}_d(P_{v,i}P_{w,j})=\frac{1}{4}\left(1+(-1)^{i+j}\langle \widetilde{x}_v,\widetilde{x}_w \rangle \right) = p(i,j|v,w).
\]
Therefore $(p(i,j|v,w))\in C_q^s(n,2)$ as well and the proposition follows.
\end{proof}

\section{Complete Graphs}
In this section, we compute the function $f_{vect}$ explicitly for the complete graph $K_n$ when $n\geq 3$. We shall then compare the function $f_{vect}$ with the function $f_q$ for $K_5$ to deduce that the set $C_q(5,2)$ is not closed.

\begin{prop}\label{f-vect-complete-graph}
For the complete graph $K_n$ on $n \ge 3$ vertices, we have that \begin{align*}
f_{vect}(t) = \begin{cases}
0, &\text{ if } 0 \le t \le \frac{1}{n}, \\
nt(nt-1), &\text{ if } \frac{1}{n} \le t \le \frac{n-1}{n}, \\
(n^2-n)(2t-1), &\text{ if } \frac{n-1}{n} \le t\le 1.
\end{cases} 
\end{align*}
\end{prop}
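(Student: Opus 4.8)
The plan is to use Theorem~\ref{posmatrixcond}, which reduces the computation of $f_{vect}(t)$ for the vertex- and edge-transitive graph $K_n$ to a semidefinite optimization: $f_{vect}(t)=s$ where $s$ is the smallest number satisfying the constraint $\max\{0,2t-1\}\le \frac{s}{|E|}\le t$ (here $|E|=n^2-n$) and for which there is an $(n+1)\times(n+1)$ positive semidefinite matrix $P=[p_{i,j}]$ with nonnegative entries, $p_{0,0}=1$, $p_{i,i}=t$ and $p_{0,i}=p_{i,0}=t$ for $1\le i\le n$, and $p_{i,j}=\frac{s}{|E|}$ for all $i\neq j$ in $\{1,\dots,n\}$ (every off-diagonal pair is an edge of $K_n$). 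So the real content is: for which pairs $(t,c)$ with $c=\frac{s}{n^2-n}$ is the matrix $P$ with the block structure
\[
P=\begin{bmatrix} 1 & t\,\mathbf{1}^T \\ t\,\mathbf{1} & (t-c)I_n + c\,\mathbf{1}\mathbf{1}^T \end{bmatrix}
\]
positive semidefinite with nonnegative entries, and we want to minimize $c$ (equivalently $s=(n^2-n)c$).

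First I would analyze positive semidefiniteness. The lower-right block $(t-c)I_n+c\,\mathbf{1}\mathbf{1}^T$ has eigenvalue $t+(n-1)c$ on the span of $\mathbf{1}$ and eigenvalue $t-c$ on $\mathbf{1}^\perp$, so we need $c\le t$ (which is already part of \eqref{eq:constraint-on-s}) and, via a Schur complement with respect to the $(0,0)$ entry, $(t-c)I_n+c\,\mathbf{1}\mathbf{1}^T - t^2\mathbf{1}\mathbf{1}^T\succeq 0$. On $\mathbf{1}^\perp$ this again gives $t-c\ge 0$; on $\mathbf{1}$ it gives $t+(n-1)c - n t^2\ge 0$, i.e.
\[
c\ \ge\ \frac{nt^2-t}{n-1}\ =\ \frac{t(nt-1)}{n-1}.
\]
Multiplying by $n^2-n=n(n-1)$ gives $s\ge nt(nt-1)$. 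Then I would check which regime of $t$ makes this bound compatible with \eqref{eq:constraint-on-s} and with entrywise nonnegativity ($c\ge0$, and automatically $c\le t\le 1$). For $t\le \frac1n$ the Schur bound is $\le 0$, so $c=0$ (hence $s=0$) works, giving the first branch; one checks $P$ is then genuinely PSD with nonnegative entries. For $\frac1n\le t\le \frac{n-1}{n}$ one checks $\frac{t(nt-1)}{n-1}$ lies in $[\max\{0,2t-1\},t]$ — the lower inequality $\frac{t(nt-1)}{n-1}\ge 2t-1$ rearranges to $(nt-(n-1))(t-1)\ge0$... wait, to $nt^2-(2n-1)t+(n-1)\ge0$, i.e. $(t-1)(nt-(n-1))\ge0$, which holds exactly for $t\le\frac{n-1}{n}$; the upper inequality $\frac{t(nt-1)}{n-1}\le t$ rearranges to $nt-1\le n-1$, i.e. $t\le\frac{n-1}{n}$. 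So on this middle interval the Schur bound is the binding one and $s=nt(nt-1)$, the second branch. For $t\ge\frac{n-1}{n}$ the constraint $\frac{s}{|E|}\ge 2t-1$ from \eqref{eq:constraint-on-s} dominates, forcing $s\ge (n^2-n)(2t-1)$, and one verifies this choice of $c=2t-1$ makes $P$ PSD with nonnegative entries (here $t-c=1-t\ge0$ and $t+(n-1)c-nt^2=(1-t)(n(1-t)-... )$, which I'd check is $\ge0$ on this interval — it factors and the sign works out); this is the third branch. Alternatively, the third branch follows from the second by the symmetry relation \eqref{symm-about-half} of Proposition~\ref{convex}: $f_{vect}(1-t)=|E|(1-2t)+f_{vect}(t)$, applied to the middle formula, since $|E|(1-2(1-t))+n(1-t)(n(1-t)-1)$ should collapse to $(n^2-n)(2t-1)$ — I would present it this way to avoid redundant computation.

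The main obstacle I anticipate is the verification of the \emph{sufficiency} (achievability) direction cleanly: showing that at the claimed optimal $c$ the matrix $P$ is actually positive semidefinite \emph{and} has all entries nonnegative, for each of the three regimes and in particular handling the boundary points $t=\frac1n$ and $t=\frac{n-1}{n}$ where the binding constraint switches. The PSD part is clean because of the explicit eigenstructure of $P$ (it is determined by its action on $\mathbf{1}$, on $\mathbf{1}^\perp\subseteq\{0\}^\perp$, and on the $e_0$ direction), but one must be careful that the Schur-complement reduction is valid (it is, since $p_{0,0}=1>0$) and that the entrywise condition $c\ge0$ is not violated — it holds since $nt^2-t\ge0$ exactly when $t\ge\frac1n$, consistently with the branch boundaries. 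Assembling these case checks into the three-branch formula, and noting that $f_{vect}$ is continuous (indeed convex, by Proposition~\ref{convex}) so the branch endpoints match up automatically, completes the proof.
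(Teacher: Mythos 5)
Your proposal is correct and follows essentially the same route as the paper: apply Theorem~\ref{posmatrixcond}, take the Schur complement with respect to the $(0,0)$ entry (the paper phrases this as one step of the Cholesky algorithm), read off the two eigenvalues of the resulting matrix $\left(t-\tfrac{s}{|E|}\right)I+\left(\tfrac{s}{|E|}-t^2\right)J$, and intersect the resulting condition $\tfrac{nt^2-t}{n-1}\le\tfrac{s}{|E|}\le t$ with the constraint \eqref{eq:constraint-on-s} in the three regimes. The only quibble is a harmless slip: $\tfrac{t(nt-1)}{n-1}\le t$ rearranges to $t\le 1$ (not $t\le\tfrac{n-1}{n}$), which is exactly the observation the paper makes and does not affect your case analysis.
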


\begin{proof}
 We seek the smallest $s$ for which the $(n+1) \times
  (n+1)$ matrix satisfying the conditions of
  Theorem~\ref{posmatrixcond} is positive semidefinite. Applying one
  step of the Cholesky algorithm, this is equivalent to the $n \times
  n$ matrix $Q=[q_{i,j}]$ being positive semidefinite, where $q_{i,i}
  = t-t^2$ and $q_{i,j} = \frac{s}{|E|} -t^2$ for $i \ne j$. Let $J$ be the $n
  \times n$ matrix of all 1's, then \begin{align*}
 Q= \left(t-\frac{s}{|E|}\right)I +\left(\frac{s}{|E|}-t^2\right)J
\end{align*} which has eigenvalues, \begin{align*}
\left\lbrace t-\frac{s}{|E|}, (n-1)\frac{s}{|E|} +t - nt^2 \right\rbrace.
\end{align*} Thus, $Q$ is positive semidefinite if and only if 
\[ \frac{nt^2-t}{n-1} \le \frac{s}{|E|} \le t.\]
Combining this condition with the constraint in \eqref{eq:constraint-on-s} and observing that $\frac{nt^2-t}{n-1} \le t$ for $0 \le t \le 1$, we arrive at \begin{align*}
\max\left\lbrace  0, \frac{nt^2-t}{n-1}\right\rbrace &\leq \frac{s}{|E|}, \text{ when } 0\leq t\leq \frac{1}{2}, \\
\max\left\lbrace 2t-1, \frac{nt^2-t}{n-1} \right\rbrace &\leq \frac{s}{|E|}, \text{ when } \frac{1}{2}\leq t\leq 1.
\end{align*} Simplifying this proves the proposition.
\end{proof}

\begin{thm}\label{thm:notclosed}
The synchronous correlation set $C_q^s(5,2)$ is not closed.
\end{thm}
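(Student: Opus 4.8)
## Proof proposal for Theorem~\ref{thm:notclosed}

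The plan is to exploit the tension between Proposition~\ref{piecewise} (if $f_q$ attains its infimum at an irrational $t$, then $f_q$ is linear on a nondegenerate rational interval around $t$) and an explicit computation showing that $f_q$ for $K_5$ is \emph{not} linear near a suitable irrational point, yet still must dip down to meet the (strictly convex, hence nonlinear) curve $f_{vect}$ computed in Proposition~\ref{f-vect-complete-graph}. Concretely, I would argue by contradiction: if $C_q^s(5,2)$ were closed, then $f_q = f_{qa}$ would be \emph{attained} at every $t\in[0,1]$ (since $\Gamma_q^s(5,2)$ would be compact), so Proposition~\ref{piecewise} applies at every irrational $t$. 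Combined with convexity of $f_q$ (Proposition~\ref{convex}) and the symmetry~\eqref{symm-about-half}, attainment at every irrational forces $f_q$ to be piecewise linear on all of $[0,1/2]$ with rational breakpoints — a genuinely rigid conclusion.

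The second and decisive ingredient is a lower bound on $f_q$ for $K_5$ strictly above $f_{vect}$ on an appropriate subinterval, together with an upper bound showing $f_q$ cannot stay linear. For $K_5$, $f_{vect}(t) = 5t(5t-1)$ on $[1/5,4/5]$ by Proposition~\ref{f-vect-complete-graph} (here $|E| = 20$, and $f_{vect}(t)/|E| = t(5t-1)/4$ is strictly convex). The key point is that $f_q(t) = s$ corresponds, via Theorem~\ref{qcsyn} and Proposition~\ref{f-vertex-edge-transitive}, to the smallest $s/|E|$ for which there is a \emph{finite-dimensional} $C^*$-algebra with trace $\tau$ and projections $P_1,\dots,P_5$ with $\tau(P_v)=t$, $\tau(P_vP_w)=s/|E|$ for all $v\neq w$. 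Now I would invoke the Kruglyak--Rabanovich--Samo\u\i lenko results~\cite{KRS} alluded to in the remark before Proposition~\ref{localcommuting}: when $P_1+\cdots+P_5$ is forced to be a scalar multiple $\alpha I$ of the identity (which happens exactly at the endpoint of the interval where $f_q$ could vanish, $t$ near $2/5$ or $1/5$ depending on parity of dimension), the admissible values of $\alpha = 5t$ for \emph{finite-dimensional} representations form a \emph{discrete} set with an accumulation point, not the full interval $[0,5]$. This discreteness of admissible $5t$-values for sums of five projections equal to a scalar is what obstructs $f_q$ from being linear-down-to-zero on a rational interval: the boundary point $\sup\{t : f_q(t) = 0\}$ turns out to be \emph{irrational} (the relevant accumulation point of the KRS spectrum), and at that irrational point the infimum is \emph{not attained} by any finite-dimensional correlation — because attainment would require $P_1+\cdots+P_5 = 5tI$ exactly in finite dimensions, impossible by KRS. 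Hence Proposition~\ref{piecewise} fails there unless $C_q^s(5,2)$ is not closed.

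The logical skeleton is therefore: (i) assume $C_q^s(5,2)$ closed; (ii) then $f_q$ attains its infimum everywhere, so by Proposition~\ref{piecewise} it is piecewise-linear with rational breakpoints on $[0,1]$; (iii) in particular $t^* := \sup\{t : f_q(t) = 0\}$ is rational and $f_q$ is attained at $t^*$; (iv) but an explicit analysis of the $K_5$ problem — using the KRS description of five projections summing to a scalar, together with Proposition~\ref{localcommuting} which forces $P_v$ to commute with $p_v = \sum_{w\neq v}P_w = (5t^*I - P_v)$ at an optimal point, hence forces $P_1+\cdots+P_5$ to be central, hence scalar in each irreducible summand — shows $5t^*$ must lie in the KRS spectrum $\Sigma$ of admissible finite-dimensional values, whose relevant threshold value is irrational; (v) contradiction with (iii).

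The main obstacle, and the part requiring the most care, is step (iv): pinning down exactly which value $t^*$ is and showing it is irrational. This means identifying the precise point where $f_q$ for $K_5$ leaves zero, relating it correctly to the KRS spectrum for five projections summing to $\alpha I$ (the set $\Sigma = \{0,1,2,3,4,5\}\cup\{[2-\sqrt{\cdots}\,,\,2+\sqrt{\cdots}\,]\cup[\,3-\cdots,3+\cdots]\}$ or similar — I would need to extract the exact KRS statement), and checking that the infimum of the nontrivial part of that spectrum is an irrational algebraic number (something like $(5-\sqrt{5})/2$ divided by $5$, giving $t^* = (5-\sqrt5)/10$ or a comparably irrational quantity). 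One must also verify that $f_{qc}(t^*) = 0$ while $f_q(t^*) > 0$ is \emph{not} what's happening — rather, $f_q(t^*) = 0$ but the infimum is only approached, not attained — which is precisely the non-closure. A clean way to package this: exhibit a sequence $t_k \downarrow t^*$ with $t_k$ in the KRS spectrum, giving correlations $p^{(k)} \in C_q^s(5,2)$ with $F(p^{(k)}) = 0$ and marginals $t_k \to t^*$; their limit $p^{(\infty)} \in C_{qa}^s(5,2)$ has $F = 0$ and marginal $t^*$, but no finite-dimensional realization exists because any such would make $P_1+\cdots+P_5 = 5t^*I$ with $t^*\notin\Sigma$. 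This directly witnesses $C_q^s(5,2)\subsetneq C_{qa}^s(5,2)$.
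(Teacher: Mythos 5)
Your overall skeleton (closedness $\Rightarrow$ attainment of the infimum defining $f_q$ $\Rightarrow$ Proposition~\ref{piecewise} forces local linearity, then contradict this with a computation for $K_5$ built on the Kruglyak--Rabanovich--Samo\u\i lenko theorem) is the right one, but your decisive step (iv) is wrong, and the contradiction you try to extract does not exist where you look for it. You locate the problem at $t^*=\sup\{t: f_q(t)=0\}$ and claim $t^*$ is irrational and non-attained. In fact, for $K_5$ one has $t^*=\frac15$: by Proposition~\ref{ranks-and-functions}, $(t^*)^{-1}$ is the projective rank of $K_5$, which equals $5$, and the value $f_q(\tfrac15)=0$ \emph{is} attained --- take five mutually orthogonal rank-one projections in $\bb M_5$ with the normalized trace. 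Indeed, if $\tau$ is faithful and $\sum_{v\ne w}\tau(P_vP_w)=0$ then the $P_v$ are mutually orthogonal (not ``sum equal to a scalar''), which forces $5t\le1$; conversely orthogonal projections of trace $\tfrac15$ exist abelianly. So $t^*$ is rational, attained, and entirely unrelated to the irrational endpoints $\frac{5\pm\sqrt5}{2}$ of the KRS interval. Relatedly, your proposed sequence with $F(p^{(k)})=0$ and marginals $t_k\downarrow t^*$ cannot exist for $t_k>\frac15$: if $P_1+\cdots+P_5=5t_kI$ then squaring gives $\sum_{v\ne w}\tau(P_vP_w)=5t_k(5t_k-1)>0$, and in any case $F\ge f_{vect}(t_k)=5t_k(5t_k-1)>0$ by Proposition~\ref{f-vect-complete-graph}. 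You have conflated ``the sum of the projections is a scalar'' with ``the projections are orthogonal.''

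The correct mechanism (which is the paper's proof) uses KRS in the opposite direction, as an \emph{upper bound} at strictly positive values of $f_q$: for every rational $t\in\bigl[\frac{5-\sqrt5}{10},\frac{5+\sqrt5}{10}\bigr]$, KRS Theorem~6 provides finite-dimensional projections with $P_1+\cdots+P_5=5t\,I$; after a cyclic direct-sum symmetrization each has trace $t$, and squaring the identity gives $\sum_{v\ne w}\operatorname{tr}(P_vP_w)=5t(5t-1)$, so $f_q(t)\le 5t(5t-1)$. Combined with the lower bound $f_q\ge f_{vect}=5t(5t-1)$ on $[\frac15,\frac45]$, this shows $f_q$ agrees with a strictly convex quadratic at every rational point of that interval, hence $f_q$ is not linear on any nondegenerate subinterval. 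Proposition~\ref{piecewise} then shows the infimum defining $f_q(t)$ is not attained at any irrational $t$ in the interval; since closedness of $C_q^s(5,2)$ would make each slice $\Gamma_q(t)$ compact and hence force attainment, $C_q^s(5,2)$ is not closed. As written, your argument has a genuine gap at exactly the point you yourself flag as ``requiring the most care,'' and it cannot be repaired without replacing step (iv) by the computation above.
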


\begin{proof}
Consider the complete graph $G=K_5$ on five vertices. By Proposition \ref{f-vect-complete-graph} we know that \begin{align*}
f_{vect}(t) = \begin{cases}
0, &\text{ if } 0 \le t \le \frac{1}{5}, \\
5t(5t-1), &\text{ if } \frac{1}{5} \le t \le \frac{4}{5}, \\
20(2t-1), &\text{ if } \frac{4}{5} \le t\le 1.
\end{cases} 
\end{align*} Notice that $f_{vect}(t)$ is quadratic in $t$ on the interval $\left[\frac{1}{5},\frac{4}{5}\right]$. We show that $f_q(t)=f_{vect}(t)=5t(5t-1)$ for all rational $t\in \left[\frac{\sqrt{5}-1}{2\sqrt{5}}, \frac{\sqrt{5}+1}{2\sqrt{5}}  \right]\subset \left[\frac{1}{5},\frac{4}{5}\right]$. This will imply that $f_q$ cannot be linear on any nondegenerate subinterval of $\left[\frac{\sqrt{5}-1}{2\sqrt{5}}, \frac{\sqrt{5}+1}{2\sqrt{5}}  \right]$, so that, by Proposition \ref{piecewise}, it will follow that the value of $f_q(t)$ is not attained for any irrational $t$ in that interval. In this case, $C_q^s(5,2)$ cannot be closed.

From~\eqref{f-hier}, we have $f_q(t)\ge f_{vect}(t)=5t(5t-1)$ when $t\in[\frac15,\frac45]$. Suppose
$t\in \left[\frac{\sqrt{5}-1}{2\sqrt{5}}, \frac{\sqrt{5}+1}{2\sqrt{5}}  \right]$ and $t$ is rational.
We will show $f_q(t)\le 5t(5t-1)$. Since $5t\in \left[\frac{5-\sqrt{5}}{2}, \frac{5+\sqrt{5}}{2}  \right]\cap \bb Q$, by Theorem 6 in \cite{KRS}, it follows that there exist five projections $P_1,\ldots,P_5\in \bb M_k$ for some natural number $k$, such that $P_1+\cdots+P_5=5t \bb I_k$. Define \begin{align*}
\widetilde{P}_i=P_i\oplus P_{i+1}\oplus\cdots\oplus P_{i+4}\in \bb M_k\oplus \bb M_k\oplus\cdots\oplus \bb M_k \subseteq \bb M_{5k}.
\end{align*} Clearly $\sum_{j=1}^5\widetilde{P}_j = 5t\bb I_{5k}$, and also notice that if $\text{tr}_{5k}$ denotes the normalized trace on $\bb M_{5k}$, then \begin{align*}
\text{tr}_{5k}(\widetilde{P}_i)=\frac{1}{5k}\text{Tr}(\widetilde{P}_i)= \frac{1}{5k}\sum_{j=1}^5\text{Tr}(P_j)  = \frac{1}{5k}\text{Tr}\left(\sum_{j=1}^5 P_j \right) =\frac{1}{5k}(5tk)=t.
\end{align*}

Therefore, we have five projections $\widetilde{P}_1,\ldots,\widetilde{P}_5\in \bb M_{5k}$ such that $\text{tr}_{5k}(\widetilde{P}_i)=t$, for all $1\leq i\leq 5$, and $\sum_{j=1}^5\widetilde{P}_j=5t\bb I_{5k}$. Squaring the sum, we get $\sum_{i\neq j}\widetilde{P}_i\widetilde{P}_j=5t(5t-1)\bb I_{5k}$, which, upon taking the normalized trace, yields \begin{align*}
\sum_{i\neq j}\text{tr}_{5k}(\widetilde{P}_i\widetilde{P}_j) = 5t(5t-1).
\end{align*} This implies $f_q(t)=5t(5t-1)$ for all $t\in \left[\frac{\sqrt{5}-1}{2\sqrt{5}}, \frac{\sqrt{5}+1}{2\sqrt{5}}  \right]\cap \bb Q$, completing the proof.
\end{proof}

\begin{remark}
Examining the above proof, we can write down an explicit element of $C_{qa}(5,2)$
that is not an element of $C_q(5,2)$.
Indeed, let $t$ be an irrational element of the interval $\left[\frac{\sqrt{5}-1}{2\sqrt{5}}, \frac{\sqrt{5}+1}{2\sqrt{5}}  \right]$.
Working with the complete graph $K_5$,
since $f_{qa}(t)=f_q(t)=5t(5t-1)$, by Proposition~\ref{f-vertex-edge-transitive} and since $\Gammat_{qa}(t)$ is closed,
there exists 
\[
p=(p(i,j|v,w))\in\Gammat_{qa}(t)\subseteq C_{qa}(5,2)
\]
such that
$p_A(0|v)=p_B(0|w)=t$ for all $v,w\in V$ and $p(0,0|v,w)=\frac t4(5t-1)$ for all $v,w\in V$ with $v\ne w$.
Now using Remark~\ref{rem:psolve}, we calculate:
if $v=w$, then
\[
p(0,0|v,w)=t,\qquad p(0,1|v,w)=p(1,0|v,w)=0,\qquad p(1,1|v,w)=1-t,
\]
while if $v\ne w$, then 
\begin{gather*}
p(0,0|v,w)=\frac14t(5t-1),\qquad p(0,1|v,w)=p(1,0|v,w)=\frac54t(1-t),\\
p(1,1|v,w)=\frac14(1-t)(4-5t).
\end{gather*}
However, since the value $f_q(t)$ is not attained in the infimum defining it, we have $p\notin C_q(5,2)$.
\end{remark}

\begin{cor}
The sets $C_q(5,2)$ and $C_{qs}(5,2)$ are not closed, and $C_{qs}(5,2) \ne C_{qa}(5,2)$.
\end{cor}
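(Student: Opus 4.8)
The plan is to leverage Theorem~\ref{thm:notclosed} and the general inclusion structure established in Section~2. We already know $C_q^s(5,2)$ is not closed. I would first argue that the non-closure of the synchronous subset forces the non-closure of the ambient sets $C_q(5,2)$ and $C_{qs}(5,2)$. Concretely, take the explicit correlation $p\in C_{qa}(5,2)\setminus C_q(5,2)$ exhibited in the remark following Theorem~\ref{thm:notclosed}; since that $p$ is built from an element of $\Gammat_{qa}(t)$ it is synchronous, so it actually lies in $C_{qa}^s(5,2)\setminus C_q^s(5,2)$, but a fortiori $p\in C_{qa}(5,2)=\overline{C_q(5,2)}$ while $p\notin C_q(5,2)$. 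This shows $C_q(5,2)\neq\overline{C_q(5,2)}$, i.e. $C_q(5,2)$ is not closed.

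For $C_{qs}(5,2)$ I would use the sandwich $C_q(5,2)\subseteq C_{qs}(5,2)\subseteq C_{qa}(5,2)$ together with $C_{qa}(5,2)=\overline{C_q(5,2)}\subseteq\overline{C_{qs}(5,2)}\subseteq\overline{C_{qa}(5,2)}=C_{qa}(5,2)$, which gives $\overline{C_{qs}(5,2)}=C_{qa}(5,2)$. Now the same point $p$ is the issue: it belongs to $C_{qa}(5,2)$, and I claim it does not belong to $C_{qs}(5,2)$. For this I would invoke the synchronous refinement: $p$ is synchronous, and $\overline{C_{qs}^s(5,2)}=C_{qa}^s(5,2)$ (by the cited Theorem 3.6 of~\cite{KPS}, which gives $\overline{C_q^s(n,k)}=C_{qa}^s(n,k)$, and by $C_q^s\subseteq C_{qs}^s\subseteq C_{qa}^s$), yet if $p$ were in $C_{qs}^s(5,2)$ its realizing PVMs would — after restricting to the subspace generated by the cyclic vector and passing through the GNS/Theorem~\ref{qcsyn} machinery — yield a tracial state on a (possibly infinite-dimensional) $C^*$-algebra attaining the infimum defining $f_q(t)=f_{qa}(t)$; but that infimum is \emph{not} attained at our irrational $t$ precisely because, were it attained by a finite-dimensional trace, Proposition~\ref{piecewise} would force local linearity of $f_q$, contradicting the strict convexity of $5t(5t-1)$ on $[\tfrac15,\tfrac45]$. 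The slicker route, which I would actually write, is to observe that $C_{qs}(5,2)$ is not closed by the identical argument as for $C_q(5,2)$: $\overline{C_{qs}(5,2)}=C_{qa}(5,2)\ni p$, and $p\notin C_{qs}(5,2)$ for the reason just given, so $C_{qs}(5,2)\neq\overline{C_{qs}(5,2)}$.

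Finally, $C_{qs}(5,2)\neq C_{qa}(5,2)$ is immediate: $C_{qa}(5,2)$ is closed (it is by definition $\overline{C_q(5,2)}$) while $C_{qs}(5,2)$ is not, so they cannot coincide; alternatively, $p\in C_{qa}(5,2)\setminus C_{qs}(5,2)$ witnesses the strict inclusion directly.

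The main obstacle I anticipate is cleanly justifying that $p\notin C_{qs}(5,2)$ — that is, that the relevant infimum is not attained even when \emph{infinite-dimensional} spatial (tensor-product) realizations are allowed, rather than just finite-dimensional ones. Proposition~\ref{piecewise} as stated concerns $f_q$ and finite-dimensional realizations, so the honest argument should either (i) restrict attention to the synchronous subsets and quote that $C_{qs}^s(n,k)$ is not closed with the same $t$-slicing, reducing matters to the already-proven fact that the value of $f_q(t)$ is unattained, or (ii) note that any $p\in C_{qs}(5,2)$ that is synchronous has, via Theorem~\ref{qcsyn}, a realization through a tracial state, and since spatial correlations sit inside $C_{qc}$ with a trace given by a vector state on a tensor product, one still gets $f_q(t)$ attained by \emph{some} trace — but then a compactness/amenability argument (the von Neumann algebra generated is finite and the trace can be approximated, or directly one uses that $f_q=f_{qa}$) is needed to contradict non-attainment. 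I would route around this entirely by working with the synchronous sets throughout: since $p$ is synchronous, $p\in C_{qs}(5,2)$ would give $p\in C_{qs}^s(5,2)$, and I would show $C_{qs}^s(5,2)$ is not closed by exactly repeating the proof of Theorem~\ref{thm:notclosed} with $f_q$ replaced by $f_{qs}$ (noting $f_q=f_{qs}=f_{qa}$ by~\eqref{f-hier} together with $\overline{C_q^s}=C_{qa}^s$), after which the three assertions of the corollary all follow formally from the inclusion chain and the closedness of $C_{qa}(5,2)$.
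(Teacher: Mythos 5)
Your treatment of $C_q(5,2)$ is correct (either via the explicit $p\in C_{qa}(5,2)\setminus C_q(5,2)$ from the remark, or, as the paper does, by noting that the synchronous conditions cut out a closed set, so closedness of $C_q(5,2)$ would force closedness of $C_q^s(5,2)$, contradicting Theorem~\ref{thm:notclosed}). The gap is in the $C_{qs}(5,2)$ part, at exactly the obstacle you flagged and then claimed to route around. Your ``slicker route'' is to show $C_{qs}^s(5,2)$ is not closed by ``exactly repeating the proof of Theorem~\ref{thm:notclosed} with $f_q$ replaced by $f_{qs}$.'' But the non-attainment mechanism in that proof is Proposition~\ref{piecewise}, whose argument is intrinsically finite dimensional: attainment of $f_q(t)$ produces a finite dimensional algebra $\bigoplus_l \bb M_{n_l}$, and the crux is that the averaged projections have \emph{rational} traces $r_l$, so that an irrational $t$ is a nontrivial convex combination of rationals and the Jensen equality case forces local linearity. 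If $f_{qs}(t)$ were attained by a genuinely infinite dimensional spatial realization, the associated tracial state (via Theorem~\ref{qcsyn}) need not take rational values on projections, and no analogue of Proposition~\ref{piecewise} follows from merely replacing $f_q$ by $f_{qs}$. Likewise, knowing $f_q=f_{qs}=f_{qa}$ as functions does not help: equality of infima does not transfer non-attainment from one set of realizations to a larger one.

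What is missing is precisely the ingredient the paper invokes: Theorem~3.10 of \cite{KPS}, which gives $C_{qs}^s(5,2)=C_q^s(5,2)$. With that identification, closedness of $C_{qs}(5,2)$ would force closedness of $C_{qs}^s(5,2)=C_q^s(5,2)$, contradicting Theorem~\ref{thm:notclosed}; equivalently, your synchronous point $p$ cannot lie in $C_{qs}^s(5,2)$ because that set literally equals $C_q^s(5,2)$, which $p$ avoids. (The result you do cite, Theorem~3.6 of \cite{KPS}, i.e.\ $\overline{C_q^s}=C_{qa}^s$, is not sufficient for this step.) Once non-closure of $C_{qs}(5,2)$ is established, your derivation of $C_{qs}(5,2)\neq C_{qa}(5,2)$ from the closedness of $C_{qa}(5,2)$ is correct and coincides with the paper's.
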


\begin{proof}
It is easily seen that if $C_q(5,2)$ were closed then necessarily the subset of synchronous quantum correlations would be closed. Hence, $C_q(5,2)$ is not closed.

Similar reasoning shows that if $C_{qs}(5,2)$ were closed, then $C_{qs}^s(5,2)$ would be closed.
But Theorem~3.10 of~\cite{KPS} shows that $C_{qs}^s(5,2) = C_q^s(5,2)$, and so $C_{qs}(5,2)$ is not closed. 

The last claim follows from the fact that $C_{qa}(5,2)$ is closed.
\end{proof}


\section{Signed games whose synchronous quantum values are not attained}

There is a significant body of research studying the $I_{3322}$ game, which is a 3 input and 2 output game, with the goal of showing that its quantum value is not attained. See \cite{DPP} for references to some of the literature on this game.

We now show how to
turn our examples of non-closure of quantum correlation sets into a collection of signed games, with 5 inputs and 2 outputs, whose synchronous quantum values are not attained.

Consider a game $\cl G$ with $n \ge 5$ inputs $I$ and 2 outputs $\{ 0,1 \}$.  Alice and Bob are rewarded with $+1$
if when they receive the same input they both reply with $0$,
and penalized with $-1$, if when they receive different inputs they respond with $0$. All other cases
have no effect on the game. Assume that the $n$ input pairs $(x,x)$ all are received with the probability $\frac{1-t}{n}$ and that the $n^2-n$ input pairs $(x,y), x \ne y$ are all received with probability $\frac{t}{n^2-n}$ where $0<t<1$.

If $p(i,j|x,y)$ represents the conditional probability density $p(i,j| x,y)$ that Alice replies with $i$ when receiving $x$ and that Bob replies with $j$ when receiving $y$,
where $i,j \in \{0,1\}$ and $1 \le x,y \le n$,
then the expected value is
\[ E(p)= \frac{1-t}{n} \sum_{x=1}^n p(0,0|x,x)  - \frac{t}{n^2-n}  \sum_{x \ne y} p(0,0|x,y).\] Set $A= \frac{1-t}{n}$ and $B= \frac{t}{n^2-n}$.


We will now show that for certain values of $t$ the supremum of this expected value over all synchronous quantum strategies is not attained.
By Corollary~\ref{cor-q-loc}, a density arises from a synchronous quantum strategy if and only if it has the form  
\[  p(i,j | x,y) = \tau(E_{x,i}E_{y,j}),\]
where $E_{x,i}$ are projections in a finite dimensional C*-algebra satisfying $E_{x,0} + E_{x,1} =I$ and $\tau$ is a tracial state on that algebra.
Thus, we are trying to compute the supremum of the quantity
\begin{equation}\label{eq:ABtauE}
A \sum_{x=1}^n \tau(E_{x,0}) - B \sum_{x \ne y} \tau(E_{x,0}E_{y,0})
\end{equation}
over all such algebras and traces.
Arguing like in the proof of Proposition~\ref{localcommuting}, one easily shows that if the supremum of this quantity is attained
for a family $(E_{x,i})_{x,i}$ of projections,
then the self-adjoint element $\sum_x E_{x,0}$ must lie in the center of the algebra $\ff{A}$ generated by the family.
If $Q$ is a minimal projection of this center, then the renormalized restriction of $\tau$ to $Q\ff{A} Q$ is a tracial state that together with
the projections $QE_{x,i}$ forms an instance over which we are taking the supremum.
Then the value of~\eqref{eq:ABtauE} is an appropriate convex combination of these instances, so all must yield the same value.
Thus by considering one of these minimal projections, we may without loss of generality assume $\sum_x E_{x,0}=\lambda I$ for some scalar $\lambda$
and that the algebra $\ff{A}$ generated by the collection of $E_{x,0}$ has trivial center, namely, is a matrix algebra $M_p(\bb{C})$ for some integer $p\ge1$.
Since this algebra has a unique tracial state, and this trace takes rational values on all projections, we see that this value of $\lambda$ must be rational.
Moreover, the quantity~\eqref{eq:ABtauE} becomes
\[
A\sum_{x=1}^n\tau(E_{x,0})-B\big(\sum_{1\le x,y\le n}\tau(E_{x,0}E_{y,0})-\sum_{x=y}\tau(E_{x,0}E_{y,0})\big)
=(A+B)\lambda-B\lambda^2.
\]
Since $B>0$, the maximum value of the right-hand expression occurs at the unique value
\begin{equation}\label{eq:bestlam}
\lambda=\lambda^*:=\frac{A+B}{2B}=1-\frac n2+\frac{n-2}{2t}.
\end{equation}
For specificity, let us take $n=5$.
Recall that, by Theorem~6 of~\cite{KRS},
for all rational $\lambda\in \left[\frac{\sqrt{5}-1}{2\sqrt{5}}, \frac{\sqrt{5}+1}{2\sqrt{5}}  \right]$,
there exist
projections $(E_{x,0})_{x=1}^5$ on a finite dimensional Hilbert space such that $\sum_{x=1}^5E_{x,0}=\lambda I$.
Choosing $t$ so that the optimizing value $\lambda^*$ belongs to that interval, we see that the supremum of the quantity~\eqref{eq:ABtauE}
is equal to
\[
(A+B)\lambda^*-B(\lambda^*)^2=\frac{(A+B)^2}{4B}.
\]
However, when $t$ is irrational then $\lambda^*$ is irrational and, as remarked above, this value cannot be realized 
as the quantity~\eqref{eq:ABtauE} for finite dimensional projections $E_{x,0}$ and a trace $\tau$.
Thus, for such values of $t$, the synchronous quantum value of this signed game is not attained.

\end{document}